\documentclass[graybox]{amsart}

\usepackage{amssymb,a4wide,amsthm,amsmath,amsfonts,epsfig}
\usepackage{mathrsfs}
\input colordvi
\usepackage[active]{srcltx}
\usepackage{comment}
\newcommand{\eps}{\varepsilon}

\theoremstyle{plain}
\newtheorem{theorem}{Theorem}[section]
\theoremstyle{remark}
\newtheorem{remark}[theorem]{Remark}

\theoremstyle{plain}
\newtheorem{corollary}[theorem]{Corollary}
\newtheorem{lemma}[theorem]{Lemma}
\newtheorem{proposition}[theorem]{Proposition}

\numberwithin{equation}{section}
\renewcommand\div{\mathrm{div}\,}

\begin{document}
\title[SDEs on Riemannian manifolds]{Attractivity, invariance and ergodicity for SDEs on Riemannian manifolds}%
\author[L. Ba\v nas, Z. Brze\'zniak, M. Ondrej\'at \& A. Prohl]{L. Ba\v nas, Z. Brze\'zniak, M. Ondrej\'at \& A. Prohl}
\address{Department of Mathematics, Heriot-Watt University,
                EH14 4AS Edinburgh, United Kingdom}
\email{l.banas@hw.ac.uk}
\address{Department of Mathematics, The University of York,
         Heslington, York YO10 5DD, United Kingdom}
\email{zb500@york.ac.uk}
\address{Institute of Information Theory and Automation, Pod Vod\'arenskou v\v e\v z\'{\i} 4, CZ-182 08, Praha 8, Czech Republic, phone: ++ 420 266 052 284, fax: ++ 420 286 890 378}
\email{ondrejat@utia.cas.cz}
\address{Mathematisches Institut, Universit\"at T\"ubingen, Auf der Morgenstelle 10, D-72076 T\"ubingen, Germany}
\email{prohl@na.uni-tuebingen.de}
\thanks{Full version of the paper with figures is available from {\tt http://na.uni-tuebingen.de/preprints.shtml}}
\thanks{The research of the third named author was partially supported by the GA \v CR Grant P201/10/0752.}
\subjclass{}
\maketitle
\begin{abstract}
We give a sufficient condition on nonlinearities of an SDE on a compact connected Riemannian manifold $M$ which implies that laws of all solutions converge weakly to the normalized Riemannian volume measure on $M$. This result is further applied to characterize invariant and ergodic measures for various SDEs on manifolds.
\end{abstract}

\section{Introduction}

This work has its origins in an attempt (so far not concluded) by the authors to find a  mathematically sound numerical approximation for the stochastic geometric wave equations. Such equations have been recently introduced by two of us in \cite{Brz+O_2007}, and our aim is to produce a counterpart of the numerical scheme introduced by three of us in \cite{bbp1} for the stochastic Landau-Lifshitz-Gilbert equations (used in the theory of ferromagnetism). For the deterministic geometric wave equations such questions have recently been studied by two of us and Sch\"{a}tzle in \cite{bpsch}. During the initial stages we encountered problems related to large time behaviour of the solutions and we realised that we did not know an answer to such problems even for finite dimensional models. The reason being that the finite dimensional  approximations to stochastic geometric wave equations, for instance those generated by finite elements methods,  are highly degenerate with respect to noise. Moreover, there are not many papers on the stochastic Langevin equations  on manifolds, but see  \cite{jorgensen_1978} and \cite{Baudoin+H+T_2008}.

On the other hand, the existence and uniqueness of invariant measures or convergence of solutions to an attracting measure was studied in case of uniformly elliptic diffusion operators e.g. in \cite{ikwa} and in case of hypoelliptic parabolic and elliptic systems in \cite{ich1} and \cite{ich2}. The variety of interesting problems is of course much wider than the cases described above as even quite simple SDE problems degenerate in such a way that none of the existing results covers them (see the examples at the end of this paper). A particular approach to such degenerated elliptic problems is always required and we present one of them - still quite general - which implies that laws of all solutions converge weakly to the normalized Riemannian volume measure on $M$. For instance, in Theorem \ref{thm-2} we formulate sufficient conditions for the existence and uniqueness of an attractive measure.  We develop further this result to characterize invariant and ergodic measures for various degenerated SDEs on manifolds. To be more precise, in Theorems \ref{thmabneq0} and \ref{n3} we give a complete characterization of the set of invariant and invariant ergodic  measures for  certain classes of SDEs on the sphere $\Bbb S^2$, and the   special orthogonal group $SO_3$.

A further goal of this paper is to construct numerical schemes for solving certain classes of SDEs on manifolds, i.e., the finite dimensional stochastic Landau-Lifshitz-Gilbert (LLG) equation, and the geodesic equation on the
sphere with stochastic forcing. The LLG equation has been widely studied in the physical literature, see for instance \cite{Chantrell_2009} and references therein. They have also received some attention from the mathematical point of view, see \cite{Kohn_R_VdE_2005}. General stochastic LLG's for non-uniform magnetisations have been studied by one of us and Goldys in \cite{Brz+G_2009}, while  Leli{\`e}vre et al in \cite{Lelivre+LeBris_2008} have described certain numerical schemes for SDEs with constraints. A convergent discretisation in space and time which is based on finite elements is proposed in \cite{bcp1}; this scheme guarantees the sphere constraint to hold for approximate magnetisation processes, and thus inherits the Lyapunov structure of the problem. As a consequence, iterates may be shown to construct weak martingale solutions of the limiting equations. Main steps of this construction are detailed for the finite dimensional LLG equation in Section~\ref{nappr1} to then study long-time dynamics in Section~\ref{nexp1}. A corresponding numerical program for second order equations with stochastic forcing is detailed in Section~\ref{napprox2},
which requires different tools;
in particular, a discrete Lagrange multiplier is used in Algorithm B for iterates to inherit the sphere constraint
in a discrete setting. Overall convergence of iterates is asserted in Theorem~\ref{numscheme1} which holds
for this particular SDE on the sphere, but which may also be considered as a first step to numerically approximate the stochastic geometric wave equation. Again, computational examples are provided in Section~\ref{nelubo}
to illustrate the
results proved in this work, and motivate further analytical studies for computationally observed long-time
behaviors which lack a sound analytical understanding at this stage.

\section{The problem and preliminary results}

Let $M$ be a compact connected $d$-dimensional Riemannian manifold whose normalized Riemannian volume measure is denoted by $\lambda$ (i.e. $\lambda(M)=1$). Integration with respect to $\lambda$ will be denoted by  $dx$. Let us assume that $F$ and  $G_1,\dots,G_m$ are  smooth vector fields on $M$. Let $W^1,\dots,W^m$ be independent $(\mathscr F_t)$-Wiener processes on a complete stochastic basis $(\Omega,\mathscr F,(\mathscr F_t),\mathbb{P})$ and consider the following Stratonovich SDE
\begin{equation}\label{sde}
du=F(u)\,dt+\sum_{k=1}^m\, G_k(u)\circ\,dW^k
\end{equation}
on $M$.

The following (preliminary) results on SDE on manifolds in this section are generally known and can be found e.g. in \cite{elw}, \cite{ikwa} or \cite{kun}. Denote by $P=\big(P_t\big)_{t\geq 0}$ the Feller semigroup associated with the SDE \eqref{sde}, i.e.
$$P_th(x)=\mathbb{E}\big(h(u^x(t))\big),\;\; h\in B_b(M),\;\; (t,x)\in\mathbb{R}_+\times M$$
 by $B_b(M)$ we denote the Banach space of all real bounded and Borel measurable functions on  $M$, and $u^x$ is a solution of (\ref{sde}) such that $$u^x(0)=x.$$ Let us also denote by $C(M)$ the separable Banach space of all real continuous  functions on  $M$, let us  recall that the restriction of $P$ to $C(M)$  is a $C_0$-semigroup on $C(M)$, denote by  $\mathcal A$ its infinitesimal generator and recall the following characterization:
 \begin{eqnarray}
\nonumber
C^2(M)&\subseteq& D(\mathcal A)\\
\mathcal Af&=&Ff+\frac12\sum_{k=1}^mG_k(G_kf),\qquad f\in C^2(M).
\label{generator}
\end{eqnarray}

We will denote by $P^\ast$ the dual semigroup on the space $\mathcal{M}(M)$ of all $\mathbb{R}$-valued Borel measures on $M$, and by ${\mathcal A}^\ast$ the infinitesimal generator of the dual semigroup.

We convene that, throughout this paper, all measures on $M$ will be Borel, i.e. with the domain $\mathscr B(M)$.

Let us  also introduce the transition kernel

 \begin{equation}\label{eqn-203}
 p(t,x,A)=\mathbb{P}\,\big(\{u^x(t)\in A\}\big),\;;\  t\ge 0, \; x\in M, \; A\in\mathscr B(M).\end{equation}

Let $\nu$ be a probability measure on $M$. We say that $\nu$ is {\it invariant}\footnote{We will also say that $\nu$ is invariant for the SDE \eqref{sde} that generates the semigroup $P$.} provided
\begin{equation}\label{eqn-204}
\int_Mp(t,x,A)\,d\nu=\nu(A), \qquad  \mbox{ for all }t\ge 0,\quad x\in M \mbox{ and }\; A\in\mathscr B(M).
\end{equation}
An invariant probability measure is said to be {\it ergodic} if it is an extremal point of the convex set of all invariant probability measures on $M$.

A probability measure $\nu$ on $M$ is called {\it attractive} if
\begin{equation}\label{eqn-205}
\lim_{t\to\infty}P_tf(x)=\int_Mf\,d\nu,\qquad \mbox{ for all } x\in M \mbox{ and }\; f\in C(M)
\end{equation}
which, in other words, means that every solution $u$ of (\ref{sde}) converges in law to $\nu$ irrespectively of the initial condition (even if $u(0)$ is random).

Let $L_0=\{F,G_1,\dots,G_m\}$ and $L_n=\{[X,Y]:X,Y\in\bigcup_{j=0}^{n-1}L_j\}$ for $n\in\mathbb{N}$, and denote
\begin{equation}\label{lieal}
\mathscr L=\operatorname{span}\bigcup_{n=0}^\infty L_n,\qquad\mathscr L_F=\operatorname{span}\Big[(L_0\setminus\{F\})\cup\bigcup_{n=1}^\infty L_n\Big]
\end{equation}
where $[X,Y]=XY-YX$. Then we say that the vector fields $F,G_1,\dots,G_m$  satisfy the hypothesis
\begin{itemize}
\item[$(\mathbf{H})$] if $\operatorname{dim}\{X_p:X\in\mathscr L\}=d$ for every $p\in M$ (the H\"ormander condition),
\item[$(\mathbf{F})$] if every $f\in C^\infty(M)$ is constant on $M$ provided $Lf=0$ on $M$ for every $L\in\mathscr L_F$,
\item[$(\mathbf{D})$] if $\operatorname{div}G_1=\dots=\operatorname{div}G_m=\operatorname{div}F=0$ on $M$,
\item[$(\mathbf{C})$] if, for every $l\in\mathbb{N}$, there exists a finite dimensional subspace $C_l$ of $C^2(M)$ such that $\mathcal A(C_l)\subseteq C_l$  and $\operatorname{span}\bigcup_{l\in\mathbb{N}} C_l$ is dense in $C(M)$  with respect to  the supremal norm.
\end{itemize}

\begin{remark} Let us observe that the equation (\ref{sde}) and the hypotheses $(\mathbf{H})$, $(\mathbf{F})$ and $(\mathbf{C})$ do not depend on the Riemannian structure of the manifold $M$. In particular,  we can, if necessary (and possible), change the metric on $M$ in order the vector fields $F,G_1,\dots,G_m$ satisfy the hypothesis $(\mathbf{D})$. Of course, a change of the metric on $M$ changes accordingly the Riemannian volume measure on $M$.
\end{remark}

\begin{remark} The hypothesis $(\mathbf{F})$ is clearly satisfied if either the union of closures of the components of $\{p\in M:\operatorname{dim}\{X_p:X\in\mathscr L_F\}=d\}$ is a connected dense subset of $M$, or if any two points of $M$ can be connected by a piecewise smooth curve consisting of integral curves of the vector fields in $\mathscr L_F$ (see the examples in Section \ref{ExamI} and \ref{ExamII}).
\end{remark}

\begin{proposition}\label{prop-charinv} A  probability measure $\nu$ on $M$ is invariant if and only if
$$
\int_M\mathcal Ah\,d\nu=0,\qquad h\in C^2(M).
$$
\end{proposition}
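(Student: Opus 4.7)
The plan is to establish the equivalence by passing between the three formulations of invariance: the transition kernel formulation of \eqref{eqn-204}, the dual semigroup formulation $P_t^\ast \nu = \nu$, and the generator formulation that is to be proved.

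For the forward implication, I would first note that \eqref{eqn-204} is equivalent to $\int_M P_t f\, d\nu = \int_M f\, d\nu$ for every $f \in B_b(M)$ (by approximating indicators and using Fubini together with the definition of $P_t$). Restricting to $h \in C^2(M) \subseteq D(\mathcal{A})$, I would apply the fundamental identity of $C_0$-semigroups,
\begin{equation*}
P_t h - h = \int_0^t P_s (\mathcal{A} h)\, ds,
\end{equation*}
integrate against $\nu$, swap the order of integration (justified since $P_s \mathcal{A} h$ is bounded and continuous in $s$), and use invariance to get $0 = t \int_M \mathcal{A} h\, d\nu$. Dividing by $t > 0$ yields the claim.

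For the converse, suppose $\int_M \mathcal{A} h \, d\nu = 0$ for every $h \in C^2(M)$. The goal is to show $\int_M P_t f\, d\nu = \int_M f\, d\nu$ for all $f \in C(M)$, which by density/approximation is equivalent to invariance in the sense of \eqref{eqn-204}. Fix $h \in C^2(M)$ and consider $\varphi(t) := \int_M P_t h\, d\nu$. Provided $P_t h \in D(\mathcal{A})$ and $\mathcal{A} P_t h = P_t \mathcal{A} h$ (standard semigroup facts), and provided $P_t h$ can be approximated in the $C^2$-sense well enough to apply the hypothesis, one has $\varphi'(t) = \int_M P_t \mathcal{A} h\, d\nu$. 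Here a care point arises: the hypothesis is stated for $C^2(M)$, whereas a priori $P_t h$ only lies in $D(\mathcal{A})$. I would therefore argue that, for $h\in C^2(M)$, one has $P_t\mathcal{A}h\in C(M)$ and $\int_M P_t\mathcal Ah\,d\nu$ can be evaluated after approximating $P_t\mathcal Ah$ by functions in $C^2(M)$ whose $\mathcal A$-images integrate to zero—equivalently, I would observe that $C^2(M)$ is a core for $\mathcal{A}$ and extend the hypothesis to all $f\in D(\mathcal{A})$ by density. Then $\varphi' \equiv 0$, so $\varphi(t) = \varphi(0)$, i.e.\ $\int_M P_t h\, d\nu = \int_M h\, d\nu$. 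Finally, density of $C^2(M)$ in $C(M)$ (for the sup norm) combined with the contraction property $\|P_t f\|_\infty \le \|f\|_\infty$ extends the equality to all $f \in C(M)$, and a monotone-class / Riesz representation argument extends it to indicators of Borel sets, recovering \eqref{eqn-204}.

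The main obstacle is the delicate point in the converse: transferring the hypothesis from $C^2(M)$ to $D(\mathcal A)$ so that it can be applied to $P_t h$. I would handle it by invoking the fact that $C^2(M)$ is dense in $D(\mathcal{A})$ in the graph norm (a core), which in turn follows from the standard theory for elliptic-type operators on compact manifolds; together with dominated convergence on $(M,\nu)$ this transfers $\int_M \mathcal{A}(\cdot)\, d\nu = 0$ from $C^2(M)$ to all of $D(\mathcal{A})$, closing the argument.
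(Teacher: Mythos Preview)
Your argument is correct; the paper itself does not give a proof but simply refers to identity (4.58) on p.~292 of Ikeda--Watanabe \cite{ikwa}. Your forward implication is the standard one, and your identification of the delicate point in the converse---extending the hypothesis from $C^2(M)$ to all of $D(\mathcal A)$ via a core argument so that it applies to $P_t h$---is accurate. The core property you invoke is indeed available here: since the vector fields $F,G_1,\dots,G_m$ are smooth and $M$ is compact, the stochastic flow depends smoothly on the initial point, so $P_t$ maps $C^\infty(M)$ into itself; a dense $P_t$-invariant subspace of $D(\mathcal A)$ is automatically a core, and this (together with $C^\infty(M)\subseteq C^2(M)$) closes your argument. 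In short, you have supplied a self-contained proof where the paper only provides a citation.
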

\begin{proof} See the identity (4.58) on p. 292 in \cite{ikwa}.
\end{proof}

\begin{proposition}[Krylov-Bogolyubov] There exists at least one invariant measure.
\end{proposition}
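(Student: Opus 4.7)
The plan is to apply the classical Krylov--Bogolyubov argument: time-average the transition kernel from any fixed starting point, use compactness of $M$ to extract a weak subsequential limit, and then exploit the Feller property recorded above to show that the limit is invariant.

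Concretely, I would fix an arbitrary point $x_0 \in M$ and define for each $T > 0$ the Borel probability measure
\[
\mu_T(A) = \frac{1}{T}\int_0^T p(t, x_0, A)\, dt, \qquad A \in \mathscr{B}(M),
\]
where the integral makes sense because $t \mapsto p(t, x_0, A)$ is Borel measurable (continuity of the sample paths plus the Feller property give enough regularity). Since $M$ is compact, the whole space of Borel probability measures on $M$ is tight, hence relatively compact in the weak topology by Prokhorov's theorem. Therefore I can extract a sequence $T_n \uparrow \infty$ and a probability measure $\nu$ on $M$ such that $\mu_{T_n} \to \nu$ weakly.

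It remains to check invariance of $\nu$. By duality it suffices to show that $\int_M P_s f\, d\nu = \int_M f\, d\nu$ for every $s \geq 0$ and every $f \in C(M)$ (this then upgrades to the defining identity \eqref{eqn-204} via a standard monotone class / Portmanteau argument). Using Fubini and a change of variables,
\[
\int_M P_s f\, d\mu_T - \int_M f\, d\mu_T
= \frac{1}{T}\Bigl(\int_T^{T+s} P_t f(x_0)\, dt - \int_0^s P_t f(x_0)\, dt\Bigr),
\]
and the right-hand side is bounded in absolute value by $\frac{2s}{T}\|f\|_\infty \to 0$ as $T \to \infty$. Since the restriction of $P$ to $C(M)$ is a $C_0$-semigroup, $P_s f \in C(M)$, so weak convergence along $T_n$ lets me pass to the limit in both integrals on the left, yielding $\int_M P_s f\, d\nu = \int_M f\, d\nu$.

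The only non-cosmetic step is this last passage to the limit, which relies on exactly two inputs already available: the compactness of $M$ (which gives both the relative compactness of $\{\mu_T\}$ and the bound $\|P_t f\|_\infty \leq \|f\|_\infty$ that kills the boundary terms) and the Feller property, which ensures that the test function $P_s f$ remains continuous and hence can be paired with weakly convergent probability measures. No additional structure of the SDE \eqref{sde} is needed, so none of the hypotheses $(\mathbf{H})$, $(\mathbf{F})$, $(\mathbf{D})$, $(\mathbf{C})$ enters the proof.
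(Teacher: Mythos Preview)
Your argument is correct and is precisely the standard Krylov--Bogolyubov proof for Feller semigroups on compact spaces; the paper itself does not spell this out but simply refers to Corollary~3.1.2 in \cite{daza}, which contains exactly this argument.
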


\begin{proof} See Corollary 3.1.2 in \cite{daza}.
\end{proof}

\begin{proposition}\label{prop-thdiv} Let $R\in C^\infty(M)$ be a non-negative function. Then the measure $d\mu=R\,d\lambda$ is an invariant measure iff the following equality holds on $M$
\begin{equation}\label{dive}
\frac 12\sum_{k=1}^d\operatorname{div}\{[\operatorname{div}(RG_k)]G_k\}=\operatorname{div}(RF).
\end{equation}
\end{proposition}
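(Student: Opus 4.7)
The plan is to reduce the statement to Proposition~\ref{prop-charinv} and then to unfold the generator \eqref{generator} and integrate by parts. By Proposition~\ref{prop-charinv}, invariance of $\mu=R\,d\lambda$ is equivalent to the condition
\[
\int_M \bigl(Fh+\tfrac12\sum_{k}G_k(G_kh)\bigr) R\,d\lambda=0, \qquad h\in C^2(M).
\]
So the task is to rewrite the left-hand side as $\int_M h\,\Phi\,d\lambda$ for a continuous function $\Phi$ depending only on $R$, $F$, $G_k$, and then to appeal to a density argument.

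The key computational tool is the divergence identity on the compact boundaryless manifold $M$: for any smooth function $g$ and smooth vector field $X$ one has $\int_M (Xg)\,d\lambda=-\int_M g\,\div X\,d\lambda$, and hence
\[
\int_M (Xg)\,\varphi\,d\lambda=-\int_M g\,\div(\varphi X)\,d\lambda.
\]
Applying this once with $X=F$, $\varphi=R$ gives $\int_M (Fh)R\,d\lambda=-\int_M h\,\div(RF)\,d\lambda$. Applying it twice with $X=G_k$ — first with $\varphi=R$, $g=G_kh$, then again with $\varphi=\div(RG_k)$, $g=h$ — gives
\[
\int_M G_k(G_kh)\,R\,d\lambda=-\int_M (G_kh)\,\div(RG_k)\,d\lambda=\int_M h\,\div\bigl\{[\div(RG_k)]G_k\bigr\}\,d\lambda.
\]
Summing, the invariance condition becomes
\[
\int_M h\,\Bigl(-\div(RF)+\tfrac12\sum_{k}\div\{[\div(RG_k)]G_k\}\Bigr)\,d\lambda=0 \qquad \forall h\in C^2(M).
\]

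Since the expression in parentheses is continuous on $M$ (indeed smooth, because $R$, $F$, $G_k$ are smooth), and since $C^2(M)$ is dense in $C(M)$ with respect to the supremum norm, the vanishing of the above integral for every such $h$ forces the integrand itself to vanish pointwise on $M$; this is the equality \eqref{dive}. Conversely, if \eqref{dive} holds, then the integral identity above holds trivially and hence $\mu$ is invariant by Proposition~\ref{prop-charinv}. The only subtlety to double-check is the integration-by-parts formula with the factor $R$ possibly vanishing, but since $RG_k$ and $RF$ are $C^1$ vector fields and $M$ has no boundary, no boundary terms appear and the identity is valid without sign restrictions on $R$; no further obstacle is expected.
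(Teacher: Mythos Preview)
Your argument is correct and is precisely the ``direct'' proof the paper alludes to but does not write out (the paper's own proof is merely a pointer to Theorem~6.3.2 in \cite{D+S_1989}). The two integrations by parts and the density argument are exactly what is needed.

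One small point worth tightening: Proposition~\ref{prop-charinv} is formulated for \emph{probability} measures, while $R\,d\lambda$ need not have total mass one. This is harmless, since if $R\not\equiv 0$ you may normalise (both invariance and \eqref{dive} are homogeneous of degree one in $R$), and if $R\equiv 0$ both conditions hold trivially; but you should say so explicitly rather than invoke Proposition~\ref{prop-charinv} verbatim.
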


\begin{proof}
This result can be proved directly as for instance the last but one claim of Theorem 6.3.2 from \cite{D+S_1989} or it can be deduced from that result.
\end{proof}

\begin{corollary}\label{cor-divcondi} If $(\mathbf{D})$ holds then
\begin{equation}\label{generatoradj}
\mathcal A^\ast f=-Ff+\frac12\sum_{k=1}^mG_k^2(f),\qquad f\in C^2(M).
\end{equation}
Moreover,  the condition (\ref{dive}) is equivalent to $\mathcal A^\ast R=0$.
 \end{corollary}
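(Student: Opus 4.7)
The plan is to derive (\ref{generatoradj}) by integration by parts (the divergence theorem on the compact manifold $M$) under hypothesis $(\mathbf{D})$, and then check that the PDE (\ref{dive}) rewrites as $\mathcal{A}^\ast R=0$ by expanding divergences with the product rule.

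For the first part, I fix $f\in C^2(M)$ and a smooth test density $R\in C^\infty(M)$, and compute $\int_M R\cdot\mathcal Af\,d\lambda$ using the identity
\[
\int_M \phi\cdot X\psi\,d\lambda = -\int_M \psi\cdot\big(X\phi+\phi\,\operatorname{div} X\big)\,d\lambda,
\]
valid for any smooth vector field $X$ and smooth $\phi,\psi$ on the closed manifold $M$. Under $(\mathbf{D})$ all divergence terms vanish, so applying the identity once to $F$ (with $\phi=R$, $\psi=f$) and twice to each $G_k$ (with $\phi=R,\ \psi=G_kf$ first, then $\phi=G_kR,\ \psi=f$) yields
\[
\int_M R\,\mathcal A f\,d\lambda = \int_M f\cdot\Big(-FR+\tfrac12\sum_{k=1}^m G_k^2 R\Big)\,d\lambda.
\]
Since this holds for all $f\in C^2(M)$, the density of $\mathcal A^\ast(R\,d\lambda)$ with respect to $\lambda$ is exactly $-FR+\tfrac12\sum_k G_k^2 R$, which is what (\ref{generatoradj}) asserts under the standard identification of the absolutely continuous measure $R\,d\lambda$ with its density $R$.

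For the second part, I use the product rule $\operatorname{div}(hX)=h\,\operatorname{div} X+Xh$. Under $(\mathbf{D})$ this collapses each occurrence: $\operatorname{div}(RG_k)=G_kR$, hence $\operatorname{div}\{[\operatorname{div}(RG_k)]G_k\}=\operatorname{div}((G_kR)G_k)=G_k(G_kR)$, and $\operatorname{div}(RF)=FR$. Substituting into (\ref{dive}) turns it into
\[
\tfrac12\sum_{k=1}^m G_k^2 R = FR, \qquad\text{i.e.,}\qquad -FR+\tfrac12\sum_{k=1}^m G_k^2 R=0,
\]
which by (\ref{generatoradj}) is precisely $\mathcal A^\ast R=0$. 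There is no real obstacle; the only mild subtlety is the notational identification of absolutely continuous signed measures with their smooth densities, which makes both statements of the corollary formally literal rather than just up to this identification.
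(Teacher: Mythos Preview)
Your proof is correct and follows essentially the same approach as the paper: both compute the formal adjoint via the integration-by-parts identity $X^\ast f=-\operatorname{div}(fX)=-f\operatorname{div}X-Xf$ (and its iterate for $X^2$), then invoke $(\mathbf{D})$ to kill the divergence terms, and finally expand (\ref{dive}) with the product rule $\operatorname{div}(hX)=h\operatorname{div}X+Xh$ to identify it with $\mathcal A^\ast R=0$. The only difference is presentational---you test against an arbitrary density $R$ explicitly, whereas the paper states the adjoint formulas for $X$ and $X^2$ directly and appeals to Proposition~\ref{prop-thdiv}.
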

\begin{proof}
Since for a vector field $X$ on $M$ and any $f\in C^2(M)$, we have
\begin{eqnarray*}X^\ast f &=&-\div (f X)=-f \div X  -X(f), \\
(X^2)^\ast  f&=&f(\div X)^2+2X(f)\div X+ fX(\div X)+X^2(f),
\end{eqnarray*}
by Proposition \ref{prop-thdiv} we get the result.
\end{proof}

\begin{proposition}\label{prop-horem} If $(\mathbf{H})$ holds then
every invariant measure has a density $R\in C^\infty(M)$ with respect to the Riemannian volume measure $\lambda$ on $M$.
\end{proposition}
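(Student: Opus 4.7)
The plan is to reinterpret the invariance condition from Proposition \ref{prop-charinv} as a distributional equation $\mathcal{A}^\ast \nu = 0$, and then invoke Hörmander's hypoellipticity theorem to conclude that $\nu$ has a smooth density with respect to $\lambda$.

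First, I would view the invariant probability measure $\nu$ as a distribution on the compact manifold $M$ (using $\lambda$ as the reference density to identify distributions with generalized functions). By Proposition \ref{prop-charinv}, one has $\int_M \mathcal{A} h\, d\nu = 0$ for every $h \in C^\infty(M)$, which is precisely the statement that $\mathcal{A}^\ast \nu = 0$ in $\mathcal{D}'(M)$, where $\mathcal{A}^\ast$ is the formal adjoint of $\mathcal{A}$ with respect to $\lambda$, acting on distributions rather than only on smooth densities.

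Next, I would identify the principal structure of $\mathcal{A}^\ast$. Using the pointwise identities already displayed in the proof of Corollary \ref{cor-divcondi}, one can write
$$\mathcal{A}^\ast = \frac{1}{2}\sum_{k=1}^m G_k^2 + X_0 + c$$
on $C^\infty(M)$, where $X_0 = -F + \sum_{k=1}^m (\mathrm{div}\, G_k)\, G_k$ is a smooth vector field on $M$ and $c \in C^\infty(M)$. This is an operator of Hörmander type: a sum of squares of vector fields plus a first-order drift plus a zeroth-order term. Since $X_0$ differs from $-F$ by a $C^\infty(M)$-linear combination of the $G_k$, the Lie algebra generated by $\{G_1,\dots,G_m, X_0\}$ coincides at every point of $M$ with the Lie algebra $\mathscr L$ generated by $\{F, G_1,\dots, G_m\}$. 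Therefore hypothesis $(\mathbf{H})$ is equivalent to the Hörmander bracket condition for $\mathcal{A}^\ast$ at every $p \in M$. Hörmander's classical theorem on sums of squares is a local statement, which extends to the compact manifold $M$ by a partition of unity, and it asserts that $\mathcal{A}^\ast$ is hypoelliptic on $M$. Since $\mathcal{A}^\ast \nu = 0$ distributionally and the right-hand side is smooth, $\nu$ must be represented by a $C^\infty$ function, i.e.\ $d\nu = R\, d\lambda$ for some $R \in C^\infty(M)$, as required.

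The main obstacle is the verification that the Hörmander bracket condition for the original generator $\mathcal{A}$ is inherited by its formal adjoint $\mathcal{A}^\ast$. This is routine once one writes out $\mathcal{A}^\ast$ explicitly as above, but it deserves explicit mention since $(\mathbf{H})$ is formulated in terms of $F, G_1,\ldots, G_m$ rather than in terms of the drift $X_0$ appearing in $\mathcal{A}^\ast$. Apart from this bookkeeping, everything else is a direct appeal to Hörmander's theorem.
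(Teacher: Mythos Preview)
Your proposal is correct and is precisely the argument the paper is invoking: the paper's own proof consists of the single sentence ``This follows from \cite{horm}'', and what you have written is exactly the standard unpacking of that citation --- rewriting invariance as $\mathcal A^\ast\nu=0$ in $\mathcal D'(M)$, checking that $\mathcal A^\ast$ is of H\"ormander type with drift $X_0=-F+\sum_k(\operatorname{div}G_k)G_k$, and noting (as in Proposition~\ref{dimeq}) that the bracket span at each point is unchanged when $F$ is replaced by $X_0$. There is no substantive difference in approach.
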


\begin{proof}
This follows from \cite{horm}.
\end{proof}

The following is a consequence of Theorem 6.3.2 from \cite{D+S_1989} (or of our Proposition \ref{prop-thdiv}).
\begin{theorem}\label{thm-2.8} The normalized Riemannian volume measure $\lambda$ on $M$ is invariant  iff
\begin{equation}
\label{eqn-nsc-inv}
\div(F)+\sum_{k=1}^m\big[ G_k(\div G_k)-(\div G_k)^2]=0 \; \mbox{ on } M.
\end{equation}
In particular, if the condition  $(\mathbf{D})$ is satisfied,  then $\lambda$  is invariant. Moreover, if both conditions  $(\mathbf{D})$ and $(\mathbf{H})$ are satisfied, then $\lambda$ is the unique invariant probability measure.
\end{theorem}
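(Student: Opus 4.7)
The plan is to deduce the scalar identity \eqref{eqn-nsc-inv} directly from Proposition \ref{prop-thdiv} applied to the constant density $R\equiv 1$, and then to establish uniqueness by combining the adjoint formula of Corollary \ref{cor-divcondi} with an energy argument and the H\"ormander condition $(\mathbf{H})$.

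For the ``iff'' statement I would specialize \eqref{dive} to $R\equiv 1$, obtaining
$\tfrac12\sum_k\div\{(\div G_k)G_k\}=\div F$, and then expand each summand via the product rule $\div(fX)=X(f)+f\div X$ with $f=\div G_k$, $X=G_k$. Collecting terms and rearranging yields the identity \eqref{eqn-nsc-inv}. In particular, if $(\mathbf{D})$ holds, then $\div F=0$ and $\div G_k=0$ for every $k$, and every summand in \eqref{eqn-nsc-inv} vanishes identically; hence $\lambda$ is invariant.

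For uniqueness under $(\mathbf{D})+(\mathbf{H})$, let $\nu$ be any invariant probability measure. Proposition \ref{prop-horem} furnishes a non-negative density $R\in C^\infty(M)$ with $d\nu=R\,d\lambda$, and Corollary \ref{cor-divcondi} turns invariance into the PDE
$$
-F(R)+\tfrac12\sum_{k=1}^m G_k^2(R)=0\qquad\text{on }M.
$$
The key move is to test this identity against $R$ itself and integrate with respect to $\lambda$. Using the divergence theorem on the closed manifold $M$ together with $\div F=\div G_k=0$, the drift term contributes $\int_M RF(R)\,d\lambda=\tfrac12\int_M F(R^2)\,d\lambda=0$, and a further integration by parts gives $\int_M RG_k^2(R)\,d\lambda=-\int_M G_k(R)^2\,d\lambda$. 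The resulting energy identity $\sum_k\int_M G_k(R)^2\,d\lambda=0$ forces $G_k(R)\equiv 0$ for every $k$; feeding this back into the PDE also yields $F(R)\equiv 0$.

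The closing step is to propagate $L(R)=0$ from $L\in L_0=\{F,G_1,\dots,G_m\}$ to every $L\in\mathscr L$: the Leibniz-type identity $[X,Y]R=X(YR)-Y(XR)$ shows that the set of vector fields annihilating $R$ is closed under brackets, so by induction on $n$ every $L\in L_n$ satisfies $L(R)=0$, and the same holds on $\mathscr L=\operatorname{span}\bigcup L_n$ by linearity. Hypothesis $(\mathbf{H})$ then implies $dR_p=0$ at every $p\in M$; since $M$ is connected, $R$ must be constant, and the normalization $\nu(M)=\lambda(M)=1$ pins it down as $R\equiv 1$. The main technical obstacle I anticipate is setting up the energy argument cleanly, so that $(\mathbf{D})$ converts $\mathcal A^\ast R=0$ into a vanishing sum of $L^2$-norms of $G_k(R)$; once this is done, the bracket-generating condition delivers the conclusion almost automatically.
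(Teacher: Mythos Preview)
Your proposal is correct and follows essentially the same route as the paper: the paper defers the ``iff'' and the $(\mathbf{D})\Rightarrow$invariance claims to Proposition~\ref{prop-thdiv} (which you spell out by setting $R\equiv 1$), and for uniqueness it uses precisely your energy identity $\int_M R\,\mathcal A^\ast R\,d\lambda=-\tfrac12\sum_k\int_M|G_kR|^2\,d\lambda$ to force $G_kR=0$, then $FR=0$, and finally invokes $(\mathbf{H})$ to conclude that $R$ is constant. The only cosmetic difference is that the paper packages the integration-by-parts step into the single displayed identity \eqref{eqn-R}, whereas you treat the $F$- and $G_k$-terms separately.
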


\begin{proof} It is sufficient to prove the last claim. So let us assume that  $\mu$ is an invariant probability measure.   Then, in view of Proposition \ref{prop-horem},  $\mu$ has a density $R$ with respect to $\lambda$ and $R\in C^\infty(M)$. Moreover, by Corollary \ref{cor-divcondi}, $\mathcal A^\ast R=0$. On the other hand, in view of the first formula in the proof of Corollary \ref{cor-divcondi}, from the assumption $(\mathbf{D})$ we infer that
\begin{equation}\label{eqn-R}
\int_MR\cdot\mathcal A^\ast R\,dx=-\frac12\sum_{k=1}^m\int_M|G_kR|^2\,dx.
\end{equation}
Hence $G_1R=\dots=G_mR=0$ and consequently $FR=-\mathcal A^\ast R=0$. Thus $$LR=0 \mbox{ for every }L\in\mathscr L(F,G_1,\dots,G_m).$$
The condition $(\mathbf{H})$ yields that $R$ is a constant function.
\end{proof}

\begin{theorem}\label{thm-2} Asuume the that the Hypothesis $(\mathbf{F})$, $(\mathbf{D})$ and $(\mathbf{C})$ hold. Assume that a process  $u$ is  a solution to SDE  (\ref{sde}). Then there exists a probability measure $\theta$ on $M$ such that
\begin{equation}\label{eqn-lim-u}
\lim_{t\to\infty}\mathbb{E}\,f(u(t))=\int_Mf\,d\theta,\qquad f\in C(M).
\end{equation}
If, in addition, $(\mathbf{H})$ holds then $\theta=\lambda$.
\end{theorem}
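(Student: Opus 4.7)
The plan is to combine the $L^{2}(\lambda)$-dissipativity coming from $(\mathbf{D})$ with the finite-dimensional spectral analysis enabled by $(\mathbf{C})$, using $(\mathbf{F})$ as a rigidity tool that forces trivial asymptotic behaviour on each finite-dimensional invariant subspace. First, by $(\mathbf{D})$ and Theorem~\ref{thm-2.8}, $\lambda$ is an invariant probability measure, so $P$ extends to a contractive $C_{0}$-semigroup on $L^{2}(M,\lambda)$. The generator formula \eqref{generator} together with the identities $G_{k}^{\ast}=-G_{k}$ and $F^{\ast}=-F$ in $L^{2}(\lambda)$ (valid because every divergence vanishes) yields the dissipation identity
$$\langle f,\mathcal Af\rangle_{L^{2}(\lambda)}=-\tfrac12\sum_{k=1}^{m}\|G_{k}f\|_{L^{2}}^{2},\qquad f\in C^{2}(M),$$
so $t\mapsto\|P_{t}f\|_{L^{2}}$ is nonincreasing.

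Next I reduce to finite dimensions via $(\mathbf{C})$. For each $l$ the operator $\mathcal A|_{C_{l}}$ is bounded on the finite-dimensional space $C_{l}\subseteq C^{2}(M)$, so $P_{t}|_{C_{l}}=e^{t\mathcal A|_{C_{l}}}$ is a bounded matrix semigroup (bounded in $L^{2}(\lambda)$ by the dissipation identity, hence in every norm on $C_{l}$). It follows that $\sigma(\mathcal A|_{C_{l}})\subseteq\{\Re z\le 0\}$ and that no eigenvalue on $i\mathbb R$ can sit inside a Jordan block of size $\ge 2$. The heart of the proof is to show that the only eigenvalue on $i\mathbb R$ is $0$, and that its eigenspace consists of constants. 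Suppose $\mathcal Af=i\beta f$ with $f\in C_{l}\setminus\{0\}$; then $\|P_{t}f\|_{L^{2}}\equiv\|f\|_{L^{2}}$ forces $G_{k}f\equiv 0$ for every $k$, whence $Ff=\mathcal Af=i\beta f$. An easy induction on bracket length shows that $Lf\in\mathbb C f$ for every $L\in\mathscr L$, with the scalar equal to $0$ whenever $L$ is a nontrivial commutator; in particular $Lf\equiv 0$ for every $L\in\mathscr L_{F}$. Hypothesis $(\mathbf{F})$ then forces $f$ to be constant, whereupon $Ff=0=i\beta f$ gives $\beta=0$; the same computation at $\beta=0$ shows that $\ker(\mathcal A|_{C_{l}})$ consists only of constants.

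From here the conclusion is routine. The Jordan decomposition splits $C_{l}=\ker(\mathcal A|_{C_{l}})\oplus E^{(l)}_{-}$, with $E^{(l)}_{-}$ spanned by generalized eigenvectors for eigenvalues of strictly negative real part, so every $f\in C_{l}$ writes as $f=c_{f}+f_{-}$ with $P_{t}f=c_{f}+e^{t\mathcal A|_{C_{l}}}f_{-}\to c_{f}$ uniformly on $M$; passing $t\to\infty$ in $\int_{M}P_{t}f\,d\lambda=\int_{M}f\,d\lambda$ identifies $c_{f}=\int_{M}f\,d\lambda$. Density of $\bigcup_{l}C_{l}$ in $C(M)$ and the sup-norm contractivity of $P_{t}$ extend the limit $\ell(f):=\lim_{t\to\infty}P_{t}f(x)$ (independent of $x$) to a positive normalized linear functional on $C(M)$; Riesz representation produces a probability measure $\theta$ with $\ell(f)=\int_{M}f\,d\theta$, and dominated convergence applied to $\mathbb E f(u(t))=\mathbb E[P_{t}f(u(0))]$ yields \eqref{eqn-lim-u} for random initial data. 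Since $\ell(P_{s}f)=\ell(f)$ the measure $\theta$ is invariant, so under the additional assumption $(\mathbf{H})$ the uniqueness clause in Theorem~\ref{thm-2.8} identifies $\theta=\lambda$.

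The main obstacle I anticipate is the rigidity step: one wants to apply $(\mathbf{F})$ to an eigenfunction $f\in C_{l}$ that is a priori only $C^{2}$, whereas $(\mathbf{F})$ is stated for $C^{\infty}$ functions. In the typical way in which $(\mathbf{F})$ is verified (connectivity of $M$ by integral curves of vector fields in $\mathscr L_{F}$, as in the remark following the hypotheses) this is harmless, since constancy of $f$ along those curves needs only $C^{1}$ regularity; otherwise one would need a preliminary smoothness bootstrap for eigenfunctions before invoking $(\mathbf{F})$.
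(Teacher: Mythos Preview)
Your argument is correct and follows the same overall architecture as the paper's proof: reduce via $(\mathbf{C})$ to the matrix semigroup $e^{t\mathcal A|_{C_l}}$, use $(\mathbf{D})$ to obtain boundedness, and use $(\mathbf{F})$ to eliminate purely imaginary eigenvalues. The implementation differs in a few places. The paper infers boundedness of $e^{tA}$ from the trivial bound $|\mathbb E f_i(u(t))|\le\|f_i\|_\infty$, whereas you deduce it from the $L^2(\lambda)$ contractivity given by the dissipation identity. The paper excludes eigenvalues $i\alpha$ by working with the \emph{transpose} $A^\ast$ and real functions $g,h$ satisfying $\mathcal A^\ast g=-\alpha h$, $\mathcal A^\ast h=\alpha g$, then applying the identity \eqref{eqn-R}; you work directly with a complex eigenfunction of $\mathcal A$. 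For the behaviour at $0$, the paper uses the Laurent expansion of the resolvent, while you use the Jordan form; these are equivalent.

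There is one substantive difference worth noting. You actually prove more than the paper states: since your rigidity argument also covers $\beta=0$, you identify $\ker(\mathcal A|_{C_l})$ with the constants and hence obtain $P_tf\to\int_M f\,d\lambda$ uniformly, so that $\theta=\lambda$ already under $(\mathbf{F})$, $(\mathbf{D})$, $(\mathbf{C})$. The paper's Lemma~\ref{lem-convexp} only establishes that $e^{tA}$ converges (without analysing the rank of the limiting projection), and then invokes $(\mathbf{H})$ via Krylov--Bogolyubov and Theorem~\ref{thm-2.8} to identify the limit. Your route is cleaner here and shows that $(\mathbf{H})$ is in fact redundant for the conclusion $\theta=\lambda$. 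Finally, the $C^\infty$-versus-$C^2$ issue you flag for $(\mathbf{F})$ is present verbatim in the paper's own proof of Lemma~\ref{lem-convexp}, so it is not a defect specific to your argument.
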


\begin{proof} Let us fix a natural number  $l\in\mathbb{N}$.  Let us choose  a basis $\{f_1,\dots,f_n\}$   of $C_l$ and let $(a_{ij})_{i,j=1}^n$ be a a real $(n\times n)$-matrix of the restriction of linear operator $\mathcal A$ to $C_l$ with respect to this basis. In particular,
$$\mathcal Af_i(p)=\sum_{j=1}^na_{ij}f_j(p), \;\;p\in M,\;\; i\in\{1,\dots,n\}.
$$
Let us denote by $A$ the linear operator in $\mathbb{R}^n$ whose matrix in the canonical basis $\{e_1,\cdots,e_n\}$ is equal to  $(a_{ij})_{i,j=1}^n$.

Let  a process $u$ be  a solution to SDE (\ref{sde}).
 Then  for every $i\in\{1,\dots,n\}$
\begin{eqnarray*}
\mathbb{E}\,f_i(u(t))&=&\mathbb{E}\,f_i(u(0))+\mathbb{E}\int_0^t(\mathcal Af_i)(u(s))\,ds
\\
&=&\mathbb{E}\,f_i(u(0))+\sum_{j=1}^na_{ij}\int_0^t\mathbb{E}\,f_j(u(s)\,ds,\qquad t\ge 0.
\end{eqnarray*}
  Hence
\begin{equation}\label{bdd}
\sum_{i=1}^n \mathbb{E}\,f_i(u(t)) e_i=e^{At} \big[ \sum_{j=1}^n \mathbb{E}\,f_j(u(0)) \, e_j\big],\qquad t\ge 0.
\end{equation}
Since by  Lemma \ref{lem-convexp} below the operator valued  function $e^{At}$ is convergent as $t\to\infty$, in view of \eqref{bdd} we infer that so are the functions $\mathbb{E}\,f_i(u(t))$.
By the  density part of assumption $(\mathbf{C})$, we conclude that for every $f\in C(M)$, $\mathbb{E}\,f(u(t))$ is convergent as $t\to\infty$.

Now we will prove the last claim and so we assume that the condition $(\mathbf{H})$ holds.
Then by the Krylov-Bogolyubov Theorem in conjunction with Theorem \ref{thm-2.8},
$$
\lim_{t\to\infty}\frac 1t\int_0^t\nu_s(B)\,ds=\lambda(B),\qquad B\in\mathscr B(M),
$$
where $\nu_t$ is the law of $u(t)$. This concludes  the proof.
\end{proof}

\begin{lemma}\label{lem-convexp} The operator valued  function $e^{tA}$ from the proof of Theorem \ref{thm-2} is convergent as $t\to\infty$.
\end{lemma}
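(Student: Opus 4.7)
The strategy is to locate the spectrum of the $n\times n$ matrix $A$: I aim to show $\sigma(A)\subseteq\{0\}\cup\{\lambda\in\mathbb{C}:\operatorname{Re}\lambda<0\}$ with $0$ (if it occurs) semisimple. By the Jordan canonical form this forces $e^{tA}$ to converge to the spectral projection onto $\ker A$ along the sum of generalized eigenspaces corresponding to the part of $\sigma(A)$ in the open left half plane.

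First I would establish the uniform bound $\sup_{t\ge 0}\|e^{tA}\|_{\mathrm{op}}<\infty$. Plugging the deterministic initial datum $u(0)=x\in M$ into \eqref{bdd} yields $e^{tA}v(x)=(P_tf_1(x),\ldots,P_tf_n(x))^{T}$, where $v(x):=(f_1(x),\ldots,f_n(x))^{T}$, and the right-hand side is bounded by $\max_i\|f_i\|_{\infty}$ because $P_t$ is Markov on the compact manifold $M$. Linear independence of $\{f_1,\ldots,f_n\}$ in $C_l$ forces $\{v(x):x\in M\}$ to span $\mathbb{R}^n$, so one may pick $x_1,\ldots,x_n\in M$ making $V:=[v(x_1)\mid\cdots\mid v(x_n)]$ invertible; then $\|e^{tA}\|\le\|e^{tA}V\|\,\|V^{-1}\|$ is bounded uniformly in $t$. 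Boundedness of $\{e^{tA}\}$ already implies $\operatorname{Re}\lambda\le 0$ for every $\lambda\in\sigma(A)$, and that any $\lambda\in\sigma(A)\cap i\mathbb{R}$ must be semisimple (otherwise a nontrivial Jordan block would produce polynomial growth).

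The main step is to rule out nonzero purely imaginary eigenvalues. Suppose $\mathcal{A}h=i\omega h$ with $\omega\ne 0$ and $0\ne h=\sum_jc_jf_j\in C_l\otimes\mathbb{C}\subseteq C^2(M,\mathbb{C})$. Using $(\mathbf{D})$ and integration by parts (identical to the computation in \eqref{eqn-R}) one obtains
\begin{equation*}
\operatorname{Re}\int_M\bar h\,\mathcal{A}h\,dx \;=\; -\tfrac12\sum_{k=1}^m\|G_kh\|_{L^2}^2,
\end{equation*}
while the same integral equals $i\omega\|h\|_{L^2}^2$ and is therefore purely imaginary. Hence $G_kh=0$ on $M$ for every $k$ and, substituting back, $Fh=i\omega h$. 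Introduce $\mathcal{V}:=\{X\in\mathscr{L}:Xh=c_Xh\text{ for some }c_X\in\mathbb{C}\}$; the direct computation $[X,Y]h=c_Yc_Xh-c_Xc_Yh=0$ shows that $\mathcal{V}$ is a Lie subalgebra of $\mathscr{L}$, and since $\mathcal{V}$ already contains $F,G_1,\ldots,G_m$ we conclude $\mathcal{V}=\mathscr{L}$ with $c_L=0$ for every nontrivial bracket $L$. Consequently $Lh=0$ for every $L\in\mathscr{L}_F$. Hypothesis $(\mathbf{F})$ then forces $h$ to be constant, so $0=Fh=i\omega h$; since $\omega\ne 0$ this yields $h=0$, contradicting the choice of $h$.

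The one delicate point (and main obstacle) is that $(\mathbf{F})$ is formulated for $C^\infty$ functions, whereas the eigenvector $h$ is only $C^2$. This is benign: both geometric sufficient conditions for $(\mathbf{F})$ recorded in the Remark following its statement amount to showing that $\nabla h$ vanishes on a connected dense open subset of $M$ and then concluding by continuity of $h$, which only requires $h\in C^1$. Granting this mild strengthening, Steps~1 and~2 together locate $\sigma(A)$ as claimed and the lemma follows.
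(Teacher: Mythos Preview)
Your argument is correct and follows essentially the same route as the paper: uniform boundedness of $e^{tA}$ from the fact that $\{v(x):x\in M\}$ spans $\mathbb{R}^n$, exclusion of nonzero imaginary eigenvalues via the energy identity under $(\mathbf{D})$ together with hypothesis~$(\mathbf{F})$, and convergence from the resulting spectral picture. The only differences are cosmetic---you work with a complex eigenfunction of $\mathcal A$ and invoke the Jordan form, whereas the paper splits into real and imaginary parts, phrases the integral identity via $\mathcal A^\ast$, and treats the eigenvalue~$0$ through the Laurent expansion of the resolvent; the $C^2$ versus $C^\infty$ issue you flag is present in the paper's proof as well.
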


\begin{proof}[Proof of Lemma \ref{lem-convexp}] Let us begin by introducing the following notation:
$$y(p)=(f_j(p))_{j=1}^n =\sum_{j=1}^n f_j(p)e_j \in \mathbb{R}^n, \;\;p\in M.$$
If follows from  equality (\ref{bdd}) that for any $p\in M$,  $\sup_{t\ge 0}|e^{At}y(p)|<\infty$. On the other hand,
since the functions $f_1,\dots,f_n$ are linearly independent in  $C^2(M)$,   the vectors $\{y(p):p\in M\}$ span $\mathbb{R}^n$. Hence, we infer
 that
\begin{equation}\label{maxa}
\sup_{t\ge 0}\Vert e^{At}\Vert <\infty.
\end{equation}
In the above we considered $A$ and $e^{At}$ as linear operators on $\mathbb{R}^n$. We may naturally extend them to the complex space
$\mathbb{C}^n$. Since the euclidean norms of $e^{At}$  in $\mathbb{R}^n$ and $\mathbb{C}^n$ coincide, we infer that
\begin{equation}\label{maxa-C}
\sup_{t\ge 0}\Vert e^{At}\Vert_{L(\mathbb{C}^n)} <\infty.
\end{equation}
It follows from \eqref{maxa-C} that the spectrum $\sigma(A)$ is contained in $\mathbb{C}_-$, the closed real-negative halfplane of $\mathbb{C}$.

We will show that $(i\mathbb{R}\setminus\{0\})\cap \sigma(A)=\emptyset$. Arguing by contradiction let us consider
$\lambda \in (i\mathbb{R}\setminus\{0\})\cap \sigma(A)$. Then
$\lambda=i\alpha$ for some $\alpha\in\mathbb{R}\setminus\{0\}$  belongs to the spectrum of the adjoint operator $A^\ast$. Hence  and there is $z=u+iv\in\mathbb{C}^n\setminus\{0\}$ such that $A^\ast z=i\alpha z$, i.e. $A^\ast u=-\alpha v$ and $A^\ast v=\alpha u$. If we denote $g=\sum_{j=1}^nu_jf_j$ and $h=\sum_{j=1}^nv_jf_j$,  then $\mathcal A^\ast g=-\alpha h$ and $\mathcal A^\ast h=\alpha g$. Hence, by identity \eqref{eqn-R}, we get
$$
0=\int_Mg\big[ \mathcal A^\ast g+h\mathcal A^\ast h\big]\,dx=-\frac12\sum_{j=1}^m\int_M \big[ |G_jg|^2+|G_jh|^2\big]\,dx.
$$

Therefore  $G_jg=G_jh=0$ on $M$ for every $j\in\{1,\dots,m\}$. Consequently, $Fg=-\alpha h$ and $Fh=\alpha g$ and so

\begin{eqnarray*}
[G_j,F]g &=& G_jFg=-\alpha G_jh=0,
\end{eqnarray*}
\begin{eqnarray*}
[G_j,F]h &=& G_jFh=\alpha G_jg=0.
\end{eqnarray*}

By an induction,  we infer that  $Xg=Xh=0$ for every $X\in\mathscr L_F$. Hence,  by assumption $(\mathbf{F})$, the functions $g$ and $h$ are constant  on $M$. Thus $h=-\alpha^{-1}Fg=0$ and $g=\alpha^{-1}Fh=0$ what implies that  $z=0$. This contradiction concludes the proof of our claim.

 Let us  now assume that $0\in\sigma(A)$.  Then there exists a natural number $N\in\mathbb{N}$, matrices $A_k$, $k\in\{1,\dots,N\}$, a positive number $\eps>0$  and a holomorphic $\mathcal{L}(\mathbb{R}^n)$-valued function $V$ defined on an open disc in $\mathbb{C}$ of radius $\eps$ such that
$$
(\lambda-A)^{-1}=\sum_{k=1}^N\lambda^{-k}A_k+V(\lambda),\qquad 0<|\lambda|<\varepsilon.
$$
By the residuum theorem,
$$
e^{tA}=\sum_{k=1}^N\frac{t^{k-1}}{(k-1)!}A_k+o(t)
$$
where $o(t)\to 0$ as $t\to\infty$. We however already know that $e^{tA}$ is bounded and hence, necessarily, $N=1$. Thus we infer that   $e^{tA}\to A_1$ as $t\to\infty$ what concludes the proof.

\end{proof}

\section{Example I}\label{ExamI}
Assume that $A,B_1,\dots,B_m$ are  antisymmetric $(n\times n)$-matrices  and  $W_1,\dots,W_m$ are independent $(\mathscr F_t)$-Wiener processes.
Consider the following stochastic differential equation in the  Stratonovich form,
\begin{equation}\label{eer}
dz=Az\,dt+\sum_{k=1}^mB_kz\circ dW_k
\end{equation}
on the sphere $\mathbb{S}^{n-1}\subseteq\mathbb{R}^n$. Denoting by $\mathcal{L}(\mathbb{R}^n)$ the space of real $(n\times n)$-matrices, every solution of (\ref{eer}) can be written as $Z(t)z_0$,  where $Z$  is an $\mathcal{L}(\mathbb{R}^n)$-valued solution of the following Stratonovich stochastic differential equation
\begin{equation}\label{mateer}
\begin{array}{rcl}
Z(t)&=&AZ\,dt+\sum_{k=1}^mB_kZ\circ\,dW^k, \;\;t\geq 0
\end{array}
\end{equation}
satisfying the initial condition $Z(0)=I$. In fact, the solution $Z$ takes values in the set $SO_n$ called the  special orthogonal group and consisting of all unitary operators in $\mathbb{R}^n$  with determinant $1$. Let us recall that
\begin{itemize}
\item the set $SO_n$ is a compact connected Lie group and a  submanifold of $\mathcal{L}(\mathbb{R}^n)$,
\item for $Z\in SO_n$,
\begin{eqnarray*}T_ZSO_n &=&\{V\in \mathcal{L}(\mathbb{R}^n)
:V+ZV^\ast Z=0\}\\
&=&\{V\in \mathcal{L}(\mathbb{R}^n)
:VZ^\ast+ZV^\ast=0\},
\end{eqnarray*}
\item there exists a bi-invariant Riemannian metric on $SO_n$ and the corresponding  normalized Riemannian volume measure $\lambda$ on $SO_n$ is bi-invariant (with respect to  multiplication by elements of $SO_n$).
\end{itemize}

For an antisymmetric $M\in\mathcal L(\mathbb{R}^n)$, we denote by $\mathbf{M}$ the following vector field on  $SO_n$,
$$\mathbf{M}: SO_n \ni Z\mapsto MZ \in T SO_n.$$

\begin{lemma} In the above framework,  the conditions $(\bf D)$ and $(\bf C)$ are satisfied for the vector fields $\mathbf{A}$ and $\mathbf{B_1}, \cdots, \mathbf{B_k}$.
\end{lemma}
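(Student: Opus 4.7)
I would verify the two conditions independently: $(\mathbf{D})$ by a flow argument using the bi-invariance of $\lambda$, and $(\mathbf{C})$ by exhibiting the filtration of $C(SO_n)$ by restrictions of ambient polynomials in the matrix entries.

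\textbf{Condition $(\mathbf{D})$.} For any antisymmetric $M \in \mathcal{L}(\mathbb{R}^n)$, the matrix $e^{tM}$ belongs to $SO_n$, and the integral curve of the vector field $\mathbf{M}\colon Z\mapsto MZ$ starting at $Z\in SO_n$ is $t\mapsto e^{tM}Z$. Thus the flow of $\mathbf{M}$ is the one-parameter group of left translations $L_{e^{tM}}$. Because the metric on $SO_n$ is bi-invariant, each $L_{e^{tM}}$ is a Riemannian isometry and hence preserves the normalized volume measure $\lambda$. Since a vector field whose flow preserves the Riemannian volume form is divergence-free, one gets $\operatorname{div}\mathbf{M}=0$. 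Applying this to $M\in\{A,B_1,\dots,B_m\}$ proves $(\mathbf{D})$.

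\textbf{Condition $(\mathbf{C})$.} Let $C_l$ be the finite-dimensional space of restrictions to $SO_n$ of real polynomials of total degree at most $l$ in the coordinate functions $Z\mapsto Z_{ij}$ on $\mathcal L(\mathbb R^n)\cong\mathbb R^{n^2}$. For any antisymmetric $M$ and any polynomial extension $\tilde f$ of $f\in C_l$ to $\mathcal L(\mathbb R^n)$, the action of $\mathbf{M}$ is computed as the ambient directional derivative in the tangent direction $MZ\in T_ZSO_n$:
\[
(\mathbf{M}f)(Z)=\sum_{i,j}(\partial_{ij}\tilde f)(Z)\,(MZ)_{ij},
\]
where $(MZ)_{ij}=\sum_k M_{ik}Z_{kj}$ is of degree $1$ and $\partial_{ij}\tilde f$ is of degree at most $l-1$, so that $\mathbf{M}f$ is the restriction of a polynomial of degree at most $l$. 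Hence $\mathbf{M}(C_l)\subseteq C_l$, and iterating together with \eqref{generator} yields $\mathcal A(C_l)\subseteq C_l$. Finally, $\bigcup_{l}C_l$ is a subalgebra of $C(SO_n)$ containing the constants, and the coordinate functions $Z\mapsto Z_{ij}$ trivially separate points of $SO_n$; density in $C(SO_n)$ follows from the Stone-Weierstrass theorem.

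\textbf{Main obstacle.} There is essentially nothing delicate: the proof reduces each condition to a standard fact (preservation of Haar/volume by one-sided translations on a Lie group with bi-invariant metric, and Stone-Weierstrass on a compact subset of $\mathbb R^{n^2}$). The only point requiring a line of verification is that $\mathbf{M}$ does not raise the polynomial degree in the ambient coordinates, which is immediate from the linearity $\mathbf{M}(Z)=MZ$ and the Leibniz rule.
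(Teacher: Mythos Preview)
Your proof is correct. For condition $(\mathbf{C})$ your argument is essentially identical to the paper's: both take $C_l$ to be the restrictions of polynomials of degree at most $l$ in the matrix entries, check that $\mathbf{M}$ does not raise the degree (the paper writes this out on monomials, you argue via the chain rule), and invoke Stone--Weierstrass.

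For condition $(\mathbf{D})$, however, you take a genuinely different route. The paper computes $\operatorname{div}\mathbf{M}$ directly and pointwise: choosing an orthonormal frame $(E_i)$ on $SO_n$, it writes $\operatorname{div}\mathbf{M}=\sum_i\langle\nabla_{E_i}\mathbf{M},E_i\rangle$, evaluates the covariant derivative as the ambient directional derivative $\langle ME_iZ,E_iZ\rangle_{\mathbb{R}^{n\times n}}$, and observes this vanishes by antisymmetry of $M$. Your argument is more conceptual: the flow of $\mathbf{M}$ is left translation by $e^{tM}$, left translations are isometries for the bi-invariant metric and therefore volume-preserving, and a vector field with volume-preserving flow is divergence-free. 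Your approach makes clear why bi-invariance is the relevant hypothesis and would apply verbatim to any right-invariant vector field on a Lie group with bi-invariant metric; the paper's computation is more self-contained (it does not appeal to the flow or to the characterization of divergence via the Lie derivative of the volume form) and uses only the antisymmetry of $M$ in the ambient inner product.
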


\begin{proof} Let us fix an antisymmetric $M\in \mathcal{L}(\mathbb{R}^n)$. We  have $\operatorname{div}\mathbf{M}=0$. Indeed, if $(E_i)$ are orthonormal vector fields on $SO_n$, then, see \cite{Chavel_2006},
$$
\operatorname{div}\mathbf{M}=\sum_{i=1}^{{n \choose 2}}\langle\nabla_{E_i}\mathbf{M},E_i\rangle_{\mathbb{R}^{n\times n}}.
$$
On the other hand, since $M$ is antisymmetric,
$$
\langle\nabla_{E_i}\mathbf{M},E_i\rangle_{\mathbb{R}^{n\times n}}=\lim_{t\to 0}t^{-1}\langle M(z+tE_iz)-Mz,E_iz\rangle_{\mathbb{R}^{n\times n}}=\langle ME_iz,E_iz\rangle_{\mathbb{R}^{n\times n}}=0.
$$
Combining the above two identities we infer that  the condition $(\bf D)$ is satisfied.

Let us prove the  the hypothesis $(\bf C)$ is also satisfied. Towards this end we identify the space $\mathcal{L}(\mathbb{R}^n)$ with $\mathbb{R}^{n\times n}$.
 Let us denote by $C_l$  the vector space spanned by the restriction to $O_n$ of polynomials on $\mathbb{R}^{n\times n}$ of order smaller or equal to $l$, i.e. $C_l=\operatorname{span}\{f_\alpha:|\alpha|\le l\}$,  where
$$
f_\alpha(X)=X^\alpha=\prod_{j=1}^n\prod_{k=1}^nx_{jk}^{\alpha_{jk}},\qquad X=[x_{ij}]\in\mathbb{R}^{n\times n},\quad\alpha\in\mathbb{N}_0^{n\times n}.
$$
Observe that
$$
(\mathbf{M}f_\alpha)(Z)=\sum_{l,j,k=1}^nm_{jl}\alpha_{jk}z^{\alpha-e^{jk}+e^{lk}}
=\sum_{l,j,k=1}^nm_{jl}\alpha_{jk}f_{\alpha-e^{jk}+e^{lk}}(Z)
, \; Z=[z_{ij}]\in\mathbb{R}^{n\times n}, $$
where $e^{jk}$ is the zero $(n\times n)$-matrix except for the position $(jk)$ where $e^{jk}_{jk}=1$. In particular, since the degree of $f_{\alpha-e^{jk}+e^{lk}}$ is $\leq l$, we infer that  $C_l$ is invariant for the operator $\mathcal A$ defined in (\ref{generator}). Finally,  by the Stone-Weierstrass Theorem, $\bigcup_{l=0}^\infty C_l$ is dense in $C(SO_n)$.
\end{proof}

\begin{corollary} The  normalized Riemannian volume measure $\lambda$ on $SO_n$ is invariant with respect to the Feller semigroup generated by the SDE (\ref{mateer}).
\end{corollary}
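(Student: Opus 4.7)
The plan is to observe that this corollary is an immediate consequence of Theorem \ref{thm-2.8} combined with the preceding lemma. The lemma already verifies that the hypothesis $(\mathbf{D})$ holds for the vector fields $\mathbf{A}, \mathbf{B_1}, \ldots, \mathbf{B_m}$ on $SO_n$, namely that $\operatorname{div}\mathbf{A} = \operatorname{div}\mathbf{B_1} = \cdots = \operatorname{div}\mathbf{B_m} = 0$. Theorem \ref{thm-2.8} asserts precisely that whenever $(\mathbf{D})$ is satisfied, the normalized Riemannian volume measure $\lambda$ is invariant for the associated Feller semigroup, so the corollary follows at once.

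More concretely, I would first verify that the SDE (\ref{mateer}) fits the framework of Section 2, with $M = SO_n$ (a compact connected Riemannian manifold under the bi-invariant metric), drift vector field $F = \mathbf{A}$, and diffusion vector fields $G_k = \mathbf{B_k}$. Then I would quote the lemma to recall $\operatorname{div}\mathbf{M} = 0$ for every antisymmetric $M$, and conclude by invoking Theorem \ref{thm-2.8}.

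There is essentially no obstacle here beyond bookkeeping: the hard work, namely the divergence computation using the orthonormal frame argument and the antisymmetry of $M$, was already carried out in the proof of the preceding lemma. A conceptual sanity check, which one might mention as a remark rather than as part of the proof, is that this result also follows from the bi-invariance of $\lambda$ under the $SO_n$-multiplication: since each flow $Z \mapsto e^{tM}Z$ generated by an antisymmetric $M$ is a left rotation by an element of $SO_n$, and $\lambda$ is left-invariant, each one-parameter subgroup preserves $\lambda$, which is another way of seeing $(\mathbf{D})$ directly.
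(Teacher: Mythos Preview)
Your proposal is correct and matches the paper's approach: the corollary is stated without proof in the paper precisely because it follows immediately from the preceding lemma (which verifies $(\mathbf{D})$) together with Theorem \ref{thm-2.8}. Your additional remark about bi-invariance is a nice sanity check but not needed for the argument.
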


\begin{theorem} If the Lie algebra $\mathscr L_A(A,B_1,\dots,B_m)$  contains all antisymmetric $(n\times n)$-matrices, then $\lambda$ is an attractive and  unique invariant probability measure for the Feller semigroup generated by the SDE (\ref{mateer}).
\end{theorem}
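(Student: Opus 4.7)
The plan is to reduce the claim to Theorem~\ref{thm-2} by verifying the remaining hypotheses $(\mathbf{F})$ and $(\mathbf{H})$ for this setting; hypotheses $(\mathbf{D})$ and $(\mathbf{C})$ have already been checked in the preceding lemma, and attractivity together with uniqueness and the identification of the limit as $\lambda$ follow from Theorem~\ref{thm-2} once $(\mathbf{F})$ and $(\mathbf{H})$ are in place.

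The key computation is the correspondence between the matrix Lie bracket on $\mathfrak{so}_n$ (the space of antisymmetric matrices) and the vector field Lie bracket on $SO_n$. For antisymmetric matrices $M_1,M_2$ and $f\in C^2(SO_n)$, a direct differentiation of $(\mathbf{M_i}f)(Z)=Df(Z)[M_iZ]$ yields
\begin{equation*}
[\mathbf{M_1},\mathbf{M_2}]f(Z)=Df(Z)\bigl[(M_2M_1-M_1M_2)Z\bigr]=-\mathbf{[M_1,M_2]}f(Z).
\end{equation*}
Since this sign is absorbed by the spans in the definition \eqref{lieal}, an induction on the bracket depth shows that $\mathbf{M}\in\mathscr L_F$ for every $M\in\mathscr L_A(A,B_1,\dots,B_m)$, where on the vector field side $F=\mathbf{A}$ and $G_k=\mathbf{B_k}$. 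By assumption the right-hand family exhausts all antisymmetric matrices, so $\{\mathbf{M}:M\in\mathfrak{so}_n\}\subseteq\mathscr L_F$.

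Now I verify $(\mathbf{H})$ and $(\mathbf{F})$. From the characterization of $T_ZSO_n$ recalled in the excerpt, the assignment $M\mapsto MZ$ is a linear isomorphism from $\mathfrak{so}_n$ onto $T_ZSO_n$ (indeed $M=(MZ)Z^{\ast}$ recovers $M$, and antisymmetry of $M$ is equivalent to the tangency condition $(MZ)Z^\ast+Z(MZ)^\ast=0$). Consequently $\{\mathbf{M}_Z:M\in\mathfrak{so}_n\}=T_ZSO_n$ for every $Z\in SO_n$, which gives $(\mathbf{H})$. For $(\mathbf{F})$, suppose $f\in C^\infty(SO_n)$ satisfies $Lf=0$ for every $L\in\mathscr L_F$; in particular $\mathbf{M}f\equiv 0$ for every antisymmetric $M$, so the differential of $f$ annihilates every tangent vector at every point, i.e.\ $df\equiv 0$. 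Since $SO_n$ is connected, $f$ is constant, establishing $(\mathbf{F})$.

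With all four hypotheses in place, Theorem~\ref{thm-2} provides an attractive probability measure $\theta$, and the final clause of that theorem identifies $\theta=\lambda$. Uniqueness of the invariant probability measure then follows immediately: any invariant measure is a fixed point of $P_t^\ast$, hence is equal to $\lim_{t\to\infty}P_t^\ast\nu=\lambda$ by \eqref{eqn-lim-u} applied with an arbitrary initial law. The only substantive point is the Lie-algebraic step in the second paragraph, translating the matrix-level assumption into a statement about $\mathscr L_F$; the rest is bookkeeping once that bridge is built.
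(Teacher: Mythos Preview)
Your proposal is correct and matches the paper's implicit approach: the paper states this theorem without proof, treating it as an immediate consequence of the preceding lemma (verifying $(\mathbf{D})$ and $(\mathbf{C})$) together with Theorem~\ref{thm-2}, and your argument supplies exactly the missing verification of $(\mathbf{H})$ and $(\mathbf{F})$ via the bracket correspondence $[\mathbf{M_1},\mathbf{M_2}]=-\mathbf{[M_1,M_2]}$. One small remark: for uniqueness you could alternatively invoke Theorem~\ref{thm-2.8} directly (since $(\mathbf{D})$ and $(\mathbf{H})$ hold), but your attractivity-based argument is equally valid.
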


As a special case let us assume that $A$ and $B$ are two   antisymmetric $(3\times 3)$-matrices  such that $B\ne 0$. Consider the following Stratonovich SDE on $SO_3$:
\begin{equation}\label{mateer2}
dZ(t)=AZ\,dt+BZ\circ\,dW.
\end{equation}
Since  both $A$ and $B$ are   antisymmetric, we infer that
 \begin{trivlist}
 \item[(i)] $[A,B]=0$ iff $A$ and $B$ are linearly dependent,
   \item[(ii)]
  $\{A,B,[A,B]\}$ spans the $3$-dimensional space of antisymmetric matrices iff $A$ and $B$ are linearly independent
\end{trivlist}
Hence,    two possibilities may arise. We begin with the first one.

\begin{theorem}\label{thmabneq0} If $[A,B]\ne 0$ then the  measure $\lambda$ on $SO_3$ is an attractive and  unique invariant probability measure for for the Feller semigroup generated by the SDE (\ref{mateer2}).
\end{theorem}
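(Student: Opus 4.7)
The plan is to invoke Theorem~\ref{thm-2} with $F=\mathbf{A}$ and $G_1=\mathbf{B}$. The preceding Lemma already furnishes the hypotheses $(\mathbf{D})$ and $(\mathbf{C})$ for this pair of vector fields, so what remains is to verify the H\"ormander condition $(\mathbf{H})$ together with the hypothesis $(\mathbf{F})$. Once these four hypotheses are in hand, attractivity of $\lambda$ follows directly from the last assertion of Theorem~\ref{thm-2}, and uniqueness of the invariant measure is then automatic: every invariant probability measure is a fixed point of the dual semigroup, but attractivity forces every solution to converge in law to $\lambda$ regardless of the (possibly random) initial datum, leaving $\lambda$ as the only candidate.

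The verification of $(\mathbf{H})$ should be essentially immediate. At each $Z\in SO_3$ the tangent space is $T_Z SO_3=\{VZ:V\in\mathfrak{so}_3\}$, which is three-dimensional, and for vector fields of the form $\mathbf{M}(Z)=MZ$ the Lie bracket coincides, up to a sign, with the matrix commutator. Hence $\mathscr L$ contains $\mathbf{A}$, $\mathbf{B}$ and $[\mathbf{A},\mathbf{B}]$, whose values at $Z$ are $AZ$, $BZ$ and $\pm[A,B]Z$. Since $[A,B]\neq 0$, observation~(i) stated just above the theorem forces $A$ and $B$ to be linearly independent, and observation~(ii) then gives that $\{A,B,[A,B]\}$ is a basis of the space of antisymmetric $(3\times 3)$-matrices. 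Right multiplication by $Z$ sends this basis to a basis of $T_Z SO_3$, so $(\mathbf{H})$ holds.

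The subtler point is $(\mathbf{F})$, because $F=\mathbf{A}$ is excluded from the generating set of $\mathscr L_F$. I would work only with $\mathbf{B}\in L_0\setminus\{F\}$ and the bracket $[\mathbf{A},\mathbf{B}]\in L_1$, and show that the Lie subalgebra $\mathfrak h\subseteq\mathfrak{so}_3$ generated by $B$ and $C:=[A,B]$ is already all of $\mathfrak{so}_3$. Identifying $\mathfrak{so}_3$ with $(\mathbb{R}^3,\times)$, we have $C=A\times B$, which is orthogonal to $B$ and nonzero, so $\{B,C\}$ is linearly independent. If $\mathfrak h$ were two-dimensional, then $[B,C]=B\times C$ would have to lie in $\operatorname{span}\{B,C\}$; but $B\times C$ is orthogonal to both $B$ and $C$ and is nonzero (since $B$ and $C$ are linearly independent), a contradiction. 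Hence $\mathfrak h=\mathfrak{so}_3$, the corresponding vector fields in $\mathscr L_F$ span $T_Z SO_3$ at every $Z$, and the second remark after the list of hypotheses $(\mathbf{H})$--$(\mathbf{C})$ then delivers $(\mathbf{F})$.

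I expect $(\mathbf{F})$ to be the only real obstacle: the verification of $(\mathbf{H})$ is almost formal and reduces to a matrix observation, while the danger in $(\mathbf{F})$ is that $\mathbf{A}$ itself is not available as a direct generator, so one has to keep track of which iterated brackets actually appear in $\mathscr L_F$ and rely on the simplicity of $\mathfrak{so}_3$ (made transparent by the cross-product picture) to close the argument.
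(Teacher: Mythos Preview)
Your proposal is correct and follows the same route as the paper: Theorem~\ref{thmabneq0} is obtained as a specialization of the preceding general statement (Theorem~3.3) which in turn reduces to Theorem~\ref{thm-2} once $(\mathbf{D})$, $(\mathbf{C})$, $(\mathbf{H})$ and $(\mathbf{F})$ are checked, the first two being supplied by Lemma~3.1 and the last two by the observations (i)--(ii) on antisymmetric $3\times 3$ matrices. Your verification of $(\mathbf{F})$ via the cross-product identification of $\mathfrak{so}_3$, showing that $B$, $[A,B]$ and $[B,[A,B]]$ already span, is in fact more explicit than what the paper writes down; the paper is content to record that $\{A,B,[A,B]\}$ spans the antisymmetric matrices and leaves the passage to $\mathscr L_F$ (where $A$ itself is excluded) to the reader.
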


If $[A,B]=0$ then  $A$ and  $B$ are commuting  and so $\exp\,\{tA+W_tB\}$ is the unique solution of (\ref{mateer2}). Assume that $B\ne 0$. Let us put $\rho=(\frac 12\|B\|^2_{\mathbb{R}^{3\times 3}})^\frac 12$.
For $K\in\{\mathbb{S}^2,SO_3\}$, define
\begin{eqnarray}\label{eqn-S}
&&S:\mathbb{S}^1\times K \ni (p,Z)\mapsto\mathbf s(p)Z \in  K,\\
\label{eqn-s}
&&\mathbf s(x,y)=\frac{1-x}{\rho^2}B^2+\frac{y}{\rho}B+I, \; (x,y)\in \mathbb{S}^1,
\end{eqnarray}

 Note  that for antisymmetric $(3\times 3)$-matrix $B$,  $B^3=-\rho^2B$. Thus the definition (\ref{eqn-S}-\ref{eqn-s}) of the map  $S$ is  correct. Moreover, since  $\mathbf s(p)\mathbf s(q)=\mathbf s(pq)$ for all $p,q\in\mathbb{S}^1$,
 the map
 \begin{equation}\label{eqn-group-S}
 \mathbb{S}^1\ni p \mapsto S_p= S(p,\cdot)\in K
 \end{equation}
 is a group homomorphism, i.e.  $S(p,S(q,Z))=S(pq,Z)$ for $p,q\in\mathbb{S}^1$ and $Z\in K$. For $Z\in K$ we will denote by $S^Z$ the orbit of the group $\big(S_p\big)_{p\in \mathbb{S}^1}$ through $Z$, i.e. the  function
 \begin{equation}\label{eqn-S^Z}
S^Z: \mathbb{S}^1\ni p \mapsto S(p,Z)\in K
 \end{equation}
 Let also denote by $H$  be the space of all such orbits, i.e.
$$H=\{S^Z[\mathbb{S}^1]:Z\in K\}.$$
The space $H$ is  equipped with the quotient topology for the surjective projection $$\pi:K \ni Z\mapsto S^Z[\mathbb{S}^1]\in  H$$
with respect to which $H$ is compact. The quotient topology of $H$ is metrizable by the classical Hausdorff metric
$$
\rho(X,Y)=\max\,\{\sup_{x\in X}\,d(x,Y),\sup_{y\in Y}\,d(y,X)\}.
$$
Let us denote by  $dx$  the normalized Haar  measure on $\mathbb{S}^1$.
 If $X\in H$ and $U,V\in K$ are two elements belonging  to the orbit of $X$,   then $S^U(dx)=S^V(dx)$ on $\mathscr B(K)$. Therefore we can define a measure $\mu_X:=S^U(dx)$ on $K$ with the support on $X$. Moreover for every $J\in\mathscr B(K)$,  the function
 $$H\ni X\mapsto\mu_X(J)\in [0,1]$$ is Borel measurable.

If $\nu$ is a probability measure on $K$, then by  $\nu_\ast=\pi(\nu)$ we denote  the image probability measure on $H$ by the map $\pi$. Finally, we  define a measure $\bar\nu$ on $K$ by
$$
\bar\nu(J)=S(dx\otimes\nu)(J)=\int_H\mu_X(J)\,d\nu_\ast(X),\qquad J\in\mathscr B(K).
$$

In the following three theorems, the equation \eqref{mateer2} will be considered on $K\in\{\mathbb{S}^2,SO_3\}$. The joint proof is given after Theorem \ref{n4}.

\begin{theorem}\label{n2} Let us assume that  $[A,B]=0$ and  $B\ne 0$. Let $Z$ be a solution of the equation \eqref{mateer2} on $K\in\{\mathbb{S}^2,SO_3\}$ and let us denote by $\nu$ the law of $Z(0)$. Then the law of $Z(t)$ converges weakly as $t\to \infty$ to the measure $\bar\nu$ on $\mathscr B(K)$.
\end{theorem}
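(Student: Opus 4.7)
The plan is to exploit the commutativity of $A$ and $B$ to solve \eqref{mateer2} in closed form, and then argue that the resulting $\mathbb{S}^1$-valued random parameter equidistributes as $t\to\infty$, thereby averaging $Z(0)$ along its orbit under the group $(S_p)_{p\in\mathbb{S}^1}$.

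First, by observation (i) stated just above the theorem, $[A,B]=0$ together with $B\neq 0$ forces $A=\alpha B$ for some $\alpha\in\mathbb{R}$. The unique solution of \eqref{mateer2} is therefore
$$Z(t)=\exp\bigl((\alpha t+W_t)B\bigr)\,Z(0).$$
Using $B^3=-\rho^2 B$ to sum the matrix exponential (Rodrigues' formula) yields
$$\exp(sB)=I+\frac{\sin(\rho s)}{\rho}B+\frac{1-\cos(\rho s)}{\rho^2}B^2=\mathbf{s}\bigl(\cos(\rho s),\sin(\rho s)\bigr),\qquad s\in\mathbb{R}.$$
Setting $P_t:=(\cos(\rho(\alpha t+W_t)),\sin(\rho(\alpha t+W_t)))\in\mathbb{S}^1$, which is independent of the $\mathscr F_0$-measurable initial datum $Z(0)$, this reads $Z(t)=S(P_t,Z(0))$ on both $K=\mathbb{S}^2$ and $K=SO_3$.

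The second key step is to show that the law $\mu_t$ of $P_t$ on $\mathbb{S}^1$ converges weakly to the normalized Haar measure $dx$. Identifying $\mathbb{S}^1$ with the unit circle in $\mathbb{C}$, its Fourier coefficients are
$$\int_{\mathbb{S}^1}p^n\,d\mu_t(p)=\mathbb{E}\bigl[e^{in\rho(\alpha t+W_t)}\bigr]=e^{in\rho\alpha t-n^2\rho^2 t/2},$$
whose absolute value is $e^{-n^2\rho^2 t/2}\to 0$ for every $n\in\mathbb{Z}\setminus\{0\}$. Since every continuous function on $\mathbb{S}^1$ is a uniform limit of trigonometric polynomials (Stone-Weierstrass / Weyl's criterion), we conclude $\mu_t\to dx$ weakly.

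Finally, the independence of $P_t$ and $Z(0)$ means that the joint law of $(P_t,Z(0))$ is the product $\mu_t\otimes\nu$, which therefore converges weakly to $dx\otimes\nu$ on the compact space $\mathbb{S}^1\times K$. For any $f\in C(K)$ the map $(p,z)\mapsto f(\mathbf{s}(p)z)$ is continuous and bounded, so
$$\mathbb{E}[f(Z(t))]=\int_{\mathbb{S}^1\times K}f(\mathbf{s}(p)z)\,d(\mu_t\otimes\nu)(p,z)\;\longrightarrow\;\int_K\!\int_{\mathbb{S}^1}f(\mathbf{s}(p)z)\,dp\,d\nu(z)=\int_K f\,d\bar\nu,$$
the last equality being exactly the definition of $\bar\nu$ given before the statement; this yields the claimed weak convergence. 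There is no serious obstacle here: the only point requiring a little care is recognising the parameterisation \eqref{eqn-s} as Rodrigues' formula evaluated at $s=\alpha t+W_t$, and noting that the deterministic drift $\alpha t$ only rotates the Fourier coefficients and therefore does not interfere with the Gaussian decay that produces equidistribution.
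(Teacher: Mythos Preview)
Your proof is correct and follows essentially the same route as the paper: write $Z(t)=S(\gamma(t),Z(0))$ via the commutativity and Rodrigues' formula, show that the $\mathbb{S}^1$-valued process $\gamma(t)$ equidistributes by checking Fourier coefficients, and push forward through the continuous map $S$. Your write-up is in fact a bit more careful than the paper's sketch---you include the factor $\rho$ in the parameterisation and make explicit the independence of $P_t$ from $Z(0)$ that justifies passing from $\mu_t\to dx$ to $\mu_t\otimes\nu\to dx\otimes\nu$.
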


The following notation will be useful in formulation of the next result. If $\theta$ is a probability measure on $H$, then $\nu_\theta$ is a probability measure on $K$ defined via the following averaging formula.
\begin{equation}\label{eqn-nu_theta}
\nu_\theta(J)=\int_H\mu_X(J)\,d\theta(X),\qquad J\in\mathscr B(K).
\end{equation}

\begin{theorem}\label{n3} Let us assume that $[A,B]=0$ and  $B\ne 0$. Then the mapping $\theta\mapsto\nu_\theta$ is a bijection between the set of probability measures on $H$ and the set of invariant probability measures for the equation \eqref{mateer2} on $K\in\{\mathbb{S}^2,SO_3\}$.
\end{theorem}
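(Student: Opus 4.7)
The plan is to reduce everything to Theorem~\ref{n2} together with the standard fact that weak limits of one-point marginals of a Feller process are invariant. The first observation is that for every probability measure $\theta$ on $H$ one has $\pi_\ast\nu_\theta=\theta$: indeed, $\mu_X$ is by construction supported on the orbit $X$, so $\pi_\ast\mu_X=\delta_X$, and integrating against $\theta$ using the definition \eqref{eqn-nu_theta} gives the identity. This already yields injectivity of $\theta\mapsto\nu_\theta$.

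To prove that each $\nu_\theta$ is invariant, I would let $Z$ be a solution of \eqref{mateer2} with $Z(0)\sim\nu_\theta$. By the previous identity $(\nu_\theta)_\ast=\theta$, so Theorem~\ref{n2} gives $\operatorname{Law}(Z(t))\to\overline{\nu_\theta}=\nu_\theta$ weakly as $t\to\infty$. Combined with $\operatorname{Law}(Z(t+s))=P_s^\ast\operatorname{Law}(Z(t))$ and the Feller property of $P$ (which makes $P_s f$ continuous for every $f\in C(K)$), a one-line calculation comparing $\int P_sf\,d\operatorname{Law}(Z(t))=\int f\,d\operatorname{Law}(Z(t+s))$ as $t\to\infty$ then shows $\int P_sf\,d\nu_\theta=\int f\,d\nu_\theta$ for every $s\ge 0$, i.e.\ $P_s^{\ast}\nu_\theta=\nu_\theta$.

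For surjectivity, let $\mu$ be any invariant probability measure on $K$ and put $\theta:=\pi_\ast\mu$. Starting $Z(0)\sim\mu$, invariance gives $\operatorname{Law}(Z(t))=\mu$ for every $t\ge 0$, whereas Theorem~\ref{n2} forces $\operatorname{Law}(Z(t))\to\overline{\mu}=\nu_\theta$; comparing the two identifications yields $\mu=\nu_\theta$. Since the bulk of the dynamical work is already packaged in Theorem~\ref{n2}, I do not expect a real obstacle here; the only point to double-check is the Feller invariance argument in the middle step, which is entirely standard but is what turns the asymptotic statement of Theorem~\ref{n2} into a genuine stationarity assertion.
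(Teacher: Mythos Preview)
Your argument is correct, and the overall architecture (injectivity via $\pi_\ast\nu_\theta=\theta$, surjectivity by combining invariance with Theorem~\ref{n2}) matches the paper's.

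The one genuine difference is how you establish that each $\nu_\theta$ is invariant. You deduce it indirectly: by Theorem~\ref{n2} the law of $Z(t)$ converges weakly to $\overline{\nu_\theta}=\nu_\theta$, and then you invoke the Feller property to pass to the limit in $\int P_sf\,d\operatorname{Law}(Z(t))=\int f\,d\operatorname{Law}(Z(t+s))$. The paper instead exploits the explicit representation $Z(t)=S(\gamma(t),Z(0))$ and verifies the purely algebraic identity $S(\mu\otimes\nu_\theta)=\nu_\theta$ for \emph{every} probability measure $\mu$ on $\mathbb{S}^1$; taking $\mu$ to be the law of $\gamma(t)$ gives $\operatorname{Law}(Z(t))=\nu_\theta$ for all $t$ directly, without any limiting procedure or appeal to the Feller property. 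Your route is more portable (it would work for any Feller semigroup once Theorem~\ref{n2} is known), while the paper's route is shorter here because the group-action structure makes the exact invariance $P_t^\ast\nu_\theta=\nu_\theta$ immediate rather than merely asymptotic.
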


\begin{theorem}\label{n4} Let $[A,B]=0$ and  $B\ne 0$. Then a probability measure $\nu$ on $K\in\{\mathbb{S}^2,SO_3\}$ is ergodic for the equation \eqref{mateer2} on $K$ iff there exists $X\in H$ such that $\nu=\mu_X$.
\end{theorem}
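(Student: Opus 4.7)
The plan is to deduce Theorem \ref{n4} directly from Theorem \ref{n3} by exploiting the fact that the bijection $\theta\mapsto\nu_\theta$ between $\mathcal{P}(H)$ (probability measures on $H$) and the set $\mathcal{I}$ of invariant probability measures on $K$ is \emph{affine}. Indeed, the definition \eqref{eqn-nu_theta} is linear in $\theta$, so for $\alpha\in[0,1]$ and $\theta_1,\theta_2\in\mathcal{P}(H)$ one has
\[
\nu_{\alpha\theta_1+(1-\alpha)\theta_2}=\alpha\,\nu_{\theta_1}+(1-\alpha)\,\nu_{\theta_2}.
\]
Combined with the bijectivity asserted in Theorem \ref{n3}, this means that $\theta\mapsto\nu_\theta$ is an affine isomorphism of convex sets, and hence sends extreme points to extreme points. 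Since ergodic invariant measures are by definition the extreme points of $\mathcal{I}$, a probability measure $\nu$ on $K$ is ergodic if and only if $\nu=\nu_\theta$ for some extreme point $\theta$ of $\mathcal{P}(H)$.

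The remaining step is the standard fact that the extreme points of $\mathcal{P}(H)$ are precisely the Dirac masses $\{\delta_X:X\in H\}$. For one direction, suppose $\delta_X=\alpha\theta_1+(1-\alpha)\theta_2$ with $\alpha\in(0,1)$ and $\theta_1,\theta_2\in\mathcal{P}(H)$; then both $\theta_i$ must be supported on $\{X\}$, hence $\theta_1=\theta_2=\delta_X$. Conversely, if $\theta\in\mathcal{P}(H)$ is not a Dirac mass, there is a Borel set $B\subseteq H$ with $0<\theta(B)<1$, and the decomposition $\theta=\theta(B)\,\theta(\cdot\mid B)+\theta(H\setminus B)\,\theta(\cdot\mid H\setminus B)$ exhibits $\theta$ as a nontrivial convex combination.

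Putting these pieces together, $\nu$ is ergodic iff $\nu=\nu_{\delta_X}$ for some $X\in H$, and by the defining formula \eqref{eqn-nu_theta} we have $\nu_{\delta_X}(J)=\int_H\mu_Y(J)\,d\delta_X(Y)=\mu_X(J)$, so $\nu=\mu_X$, as required.

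There is essentially no technical obstacle; the only mild point to verify is that the affine bijection genuinely preserves extremality (which is immediate since any nontrivial convex decomposition on one side transports, via linearity and bijectivity, to a nontrivial convex decomposition on the other side). Everything else — the classification of extreme points of $\mathcal{P}(H)$ and the evaluation $\nu_{\delta_X}=\mu_X$ — is formal. Thus Theorem \ref{n4} will follow in a few lines once Theorem \ref{n3} is in hand.
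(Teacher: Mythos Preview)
Your proposal is correct and follows essentially the same approach as the paper: both deduce Theorem \ref{n4} from Theorem \ref{n3} by using the affinity of $\theta\mapsto\nu_\theta$ to transport the extreme-point characterization from $\mathcal{P}(H)$ (where extreme points are Dirac masses) to the set of invariant measures. The only cosmetic difference is that the paper carries out the two directions by explicit decomposition and by applying the inverse map $\pi$ to a convex combination, whereas you invoke the general principle that an affine bijection of convex sets preserves extreme points; the content is the same.
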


\begin{proof} Lets us observe that since $B^3=-\rho B$,
\begin{equation}\label{eqn-e^sB}
e^{sB}=\frac{1-\cos(\rho s)}{\rho^2}B^2+\frac{\sin(\rho s)}{\rho}B+I, \;\; s\in \mathbb{R}.
\end{equation}
Since $A$ and $B$ are linearly dependent, there exists $\kappa$ such that $A=\kappa B$ and, as already mentioned, every solution $Z$ of the equation \eqref{mateer2} has the form
$$
Z(t)=e^{tA+W_tB}Z(0)=e^{(\kappa t+W_t)B}Z(0)=S(\gamma(t),Z(0))
$$
where $\gamma(t)=(\cos(\kappa t+W_t),\sin(\kappa t+W_t))$. Using the Fourier series, we may easily prove that $\gamma(t)$ converges in law to the normalized uniform measure $dx$ on $\mathscr B(\Bbb S^1)$, hence $Z(t)$ converges in law to $\bar\nu=S(dx\otimes\nu)$ where $\nu$ is the law of $Z(0)$ on $\mathscr B(K)$. Consequently, Theorem \ref{n2} is proved on one hand, and on the other hand, we proved that if $\nu$ is invariant then $\nu=\bar\nu$.

However, $S(\mu\otimes\bar\nu)=\bar\nu$ for every probability measure $\mu$ on $\mathscr B(\Bbb S^1)$ (in particular, for the law of $\gamma(t)$) hence $\nu$  is invariant iff $\nu=\bar\nu$. Moreover, $S(\mu\otimes\nu_\theta)=\nu_\theta$ for every probability measure $\mu$ on $\mathscr B(\Bbb S^1)$ and every probability measure $\theta$ on $\mathscr B(H)$. Hence $\{\nu_\theta:\theta\textrm{ probability measure on }\mathscr B(H)\}$ coincides with the set of invariant probability measures for the equation \eqref{mateer2}. The injectivity of $\theta\mapsto\nu_\theta$ follows from the fact that the image measure $\pi(\nu_\theta)$ coincides with $\theta$ on $\mathscr B(H)$ and Theorem \ref{n3} is thus proved.

To prove Theorem \ref{n4}, realize the following: If a probability measure $\theta$ on $\mathscr B(H)$ is not a Dirac measure then there exists an open set $H_0\subseteq H$ such that $\theta(H_0)\in(0,1)$. If we define $H_1=H\setminus H_0$ and $\theta_i(J):=\theta(J\cap H_i)/\theta(H_i)$ for $i\in\{0,1\}$ and $J\in\mathscr B(H)$ then $\nu_\theta=\theta(H_0)\nu_{\theta_0}+\theta(H_1)\nu_{\theta_1}$ and thus $\nu_\theta$ is not extremal. On the other hand, if $\mu_X=\nu_{\delta_X}=\lambda_0\nu_{\theta_0}+\lambda_1\nu_{\theta_1}=\nu_{\lambda_0\theta_0+\lambda_1\theta_1}$ holds for some $X\in H$, $\lambda_i>0$, $\lambda_0+\lambda_1=1$ and for some probability measures $\theta_0$, $\theta_1$ on $\mathscr B(H)$ then $\delta_X=\pi(\nu_{\delta_X})=\pi(\nu_{\lambda_0\theta_0+\lambda_1\theta_1})=\lambda_0\theta_0+\lambda_1\theta_1$,
hence $\theta_0=\theta_1$ and $\mu_X$ is thus extremal.
\end{proof}

\begin{remark}\label{rem-equiv} If $[A,B]=0$, $B\ne 0$ and $\nu$ is a probability measure  on $K$ then the following three conditions are equivalent.
\begin{itemize}
\item[(i)] $\nu$ is an invariant measure for the equation \eqref{mateer2},
\item[(ii)] $S_p(\nu)=\nu$  for every $p\in\mathbb{S}^1$,
\item[(iii)] $\nu=S(dx\otimes\nu)$.
\end{itemize}
\end{remark}

\begin{remark}\label{rem-interpretation} In a finer look, Theorem \ref{n2} tells us  also what happens, as $t\to\infty$, on fibres of $\pi$. If $[A,B]=0$ then the diffusion (\ref{mateer2}) uniformizes, as $t\to\infty$, the initial distribution $\nu$ along the orbits $S^Z=\{S(p,Z):p\in\mathbb{S}^1\}$ indexed by $Z\in K$. Indeed, let $\nu^t$ be the law of the solution (with the initial law $\nu$) at time $t\ge 0$ and consider the $\nu_\ast$-unique system of conditional probabilities $(\nu^t\,[\cdot|\pi=X])_{X\in H}=(\nu^t_X)_{X\in H}$ on $\mathscr B(K)$, $\nu^t(X)=1$, to which the the measure $\nu^t$  desintegrates with respect to $\nu_\ast$. Then it can be verified by the definition of a desintegrated measure that $\nu^t_X=S(l_t\otimes\nu^0_X)$ where $l_t$ is the law of $\gamma(t)$, hence $\nu_X^t$ converges weakly to $\mu_X$ by Theorem \ref{n2}.
\end{remark}

\subsection{Numerical approximation}\label{nappr1}
The stochastic Landau-Lifshitz-Gilbert equation describes (uniform) atomistic ferromagnetic spin dynamics
at finite temperatures; it is of the form (\ref{mateer2}), with
$Az = -z \times h$ and $Bz = -z \times (h + h_{\perp})$, such that
\begin{equation}\label{eqn-explicit}
dz = - z \times h {\rm d}t -z \times (h + h_{\perp}){\rm d}W \,, \qquad \, t \geq 0\,,
\end{equation}
with $z(0) = z_0$. Here, $z_0, h \in {\mathbb R}^3$ are some unit vectors, and $h_{\perp} \in {\mathbb R}^3$ be
perpendicular to $h$. The
Hamiltonian ${\mathcal E}( \varphi) = -\langle h, \varphi\rangle_{{\mathbb R}^3}$ is conserved by the flow
for absent stochastic forcing, or $h_{\perp} = 0$.
We propose a non-dissipative, symmetric, and convergent discretization of \eqref{eqn-explicit}.
Let $I_k$ be an equi-distant mesh of size $k>0$ covering $[0,T]$. We denote
$\varphi^{n+1/2} := \frac{1}{2} ( \varphi^{n+1} + \varphi^n)$. \\

\textbf{Algorithm A.} Let $Z^0 := z_0$. For $n \in\mathbb{N}$, find $Z^{n+1} \in {\mathbb R}^3$ such that
\begin{equation}\label{sta1}
Z^{n+1} - Z^n = - k Z^{n+1/2}  \times h - Z^{n+1/2} \times (h + h_{\perp}) \Delta W_{n+1}\, .
\end{equation}
where $\Delta W_{n+1} := W(t_{n+1}) - W(t_{n}) \sim {\mathscr N}(0,k)$. \\

Let $k<1$. Similar to \cite{bbp1}, the ${\mathbb R}^3$-valued random variables $\{Z^{n+1}\}_{n\geq 0}$ exist,
are unique,  satisfy
$\vert Z^{n+1}\vert=1$ for all $n \geq 0$, and converge to the solution of \eqref{eqn-explicit}
 for $k \rightarrow 0$. Moreover, pathwise conservation of energy
${\mathcal E} (Z^{n+1})= {\mathcal E}(z_0)$ holds for $h_{\perp}=0$.
\begin{remark}\label{algB}
Let ${\mathscr Z}^{n+1} := {\mathbb E} Z^{n+1}$. Then for
$\overline{h} := \frac{h + h_{\perp}}{\vert h + h_{\perp}}$,
\begin{equation}\label{aa1}
\Bigl\vert {\mathscr Z}^{n+1} - {\mathscr Z}^{n} + k {\mathscr Z}^{n+1/2} \times h
+ \frac{k}{2} \vert h + h_{\perp}\vert^2 \Bigl( {\mathscr Z}^{n+1/2} -
 \langle
{\mathscr Z}^{n+1/2}, \overline{h}\rangle \overline{h}\Bigr) \Bigr\vert
\leq C k^{2}\, .
\end{equation}
Hence, the stochastic forcing exerts a damping in direction perpendicular to $h + h_{\perp}$.
Estimate (\ref{aa1}) follows as in the (more detailed)
Remark~\ref{am1} below for a second order equation with stochastic forcing. The main ideas
are to repeatedly use (\ref{sta1}), and approximation arguments that base on
$\vert Z^{n+1}\vert = 1$. As a consequence, the limiting equation reads
$$ {\mathscr Z}_t + {\mathscr Z} \times h + \frac{1}{2} \vert h + h_{\perp}\vert^2
\bigl( {\mathscr Z} - \langle {\mathscr Z}, \overline{h}\rangle \overline{h}\bigr) = 0\, .$$
\end{remark}

\subsection{Numerical experiments}\label{nexp1}
We perform computational studies of the stochastic Lan\-dau-Lifshitz-Gilbert
equation (\ref{eqn-explicit}) using {Algorithm A}.
The nonlinear system in {Algorithm A} is solved up to machine accuracy
by a fixed-point algorithm, cf. \cite{bbp1}.
By $N$ we denote the number of discrete sample paths of the Wiener process used
in the computations, and ${Z}_l^{n}$ denotes the numerical solution at time $t^n$
computed for the $l$-th sample path.

The unit sphere is divided into segments $\omega_{ij}\subset {\mathbb S}^2$
associated with points
$$
\mathbf{x}_{ij}=\left(\sin(i\pi /16)\cos(j\pi /16), \sin(i\pi /16)\sin(j\pi /16), \cos(i\pi /16)\right)\, ,
$$
$i=0,\dots,16$, $j=0,\dots,31$
such that
$\omega_{ij}=\left\{\mathbf{x}\in {\mathbb S}^2|\quad \mathbf{x}_{ij}
   = \arg \min_{\mathbf{x}_{lm}} |\mathbf{x} - \mathbf{x}_{lm}| \right\}$.
For the above partition of the sphere, at a fixed time level
$t^n$, we construct a piecewise constant empirical probability density function
$\hat{f}^n(\mathbf{x}): {\mathbb S}^2 \rightarrow R$ as
$$
\hat{f}^n(\mathbf{x})|_{\omega_{ij}}= \hat{f}^n(\mathbf{x}_{ij})
 = \frac{\#\{l|\, Z^{n}_l\in \omega_{ij}\}}{|\omega_{ij}|N}\, ,
$$
for $i=0,\dots,16$, $j=0,\dots,31$, where $\#\Omega$ denotes the cardinality of the set $\Omega$.

The presented results below were computed with $k=0.01$ for $N=20000$ sample paths.

For the first numerical experiment we consider
$z_0=(0,1/\sqrt(2),1/\sqrt(2))$,
$h=(0,0,1)$, $h_{\perp}=(0,0,0)$, which corresponds to $[A,B]=0$.
The resulting probability density for any $z_0$
is uniform on a circle with the center $(z_0,h)h$,
see Figure~\ref{fig:heqhper}. Further, $\mathbb{E}(z(t))\rightarrow (z_0,h)h$ for $t\rightarrow \infty$ and
$(z(t),h)=(z_0,h)$ for any $z_0$ (i.e. the pathwise angle between $z$ and $h$ is constant in time)
for any $t$. The computations agree with the statements of Theorems~\ref{n2}-\ref{n4}.


In the second experiment we set $z_0=(0,1/\sqrt{2},1/\sqrt{2})$,
$h=(0,0,1)$, $h_{\perp}=(0,1,0)$. In this case we have $[A,B]\neq 0$
and the initial probability density which is a Dirac measure concentrated at $z_0$
is expected to approach the uniform probability density function
of the unit sphere $f^S=(4\pi)^{-1}\approx 0.0796$.
We also expect that $\mathbb{E}(z(t))\rightarrow 0$ for $t\rightarrow \infty$. Because of the finite approximation of the problem, the uniform probability density $f^S$
is only attained approximately. Once the computed probability density becomes diffuse,
it fluctuates randomly around the uniform state $f^S$, see
Figure~\ref{fig:hperp}.
Analogically, the trajectory $\mathbb{E}(u(t))$ approaches
the center of the sphere and for long times fluctuates randomly around the center,
see Figure~\ref{fig:hperp_aver}.
The random fluctuations in the probability density function $\hat{f}^n$
can be significantly reduced
by taking the average over a sufficiently large number of time levels.
Here, we compute the time averaged probability density function
$\overline{f}$ over the last $100$ time levels, i.e.,
we take $\overline{f}(\mathbf{x}) = \frac{1}{100}\sum_{T/k-100}^{T/k} \hat{f}^n(\mathbf{x})$,
see Figure~\ref{fig:hperp_aver}.
The results in the second numerical experiment agree with the assertion of Theorem~\ref{thmabneq0}.



\section{Example II}\label{ExamII}

The geodesic equation on the sphere has the form $d\dot u=-|\dot u|^2u\,dt$. Consider this second order equation with a stochastic perturbation
\begin{equation}\label{secord}
d\dot u=-|\dot u|^2u\,dt+(u\times\dot u)\circ\,dW
\end{equation}
which is a Stratonovich SDE $dz=F(z)\,dt+G(z)\circ\,dW$ on the tangent bundle $T\mathbb{S}^2\subseteq\mathbb{R}^6$ driven by a standard one-dimensional Wiener process $W$ where $u\times\dot u\in\mathbb{R}^3$ is the outer product in $\mathbb{R}^3$ and
$$
F(p,\xi)=\left(\begin{array}{c}\xi\\-|\xi|^2p\end{array}\right),\quad G(p,\xi)=\left(\begin{array}{c}0\\p\times\xi\end{array}\right),\quad [G,F](p,\xi)=\left(\begin{array}{c}p\times\xi\\0\end{array}\right),\quad (p,\xi)\in T\mathbb{S}^2
$$
are tangent vector fields on $T\mathbb{S}^2$. Observe that $F$, $G$ and $[G,F]$ are also orthogonal tangent vector fields on the 3-dimensional submanifold
\begin{equation}\label{thesetmr}
M_r=\{(p,\xi)\in T\mathbb{S}^2:|\xi|=r\},\qquad r>0
\end{equation}
hence the hypothesis $(\bf{H})$ is verified. If we put $E_1=F/(r^2+r^4)^\frac 12$, $E_2=G/r$, $E_3=[G,F]/r$ then $\{E_1,E_2,E_3\}$ is an othonormal frame on $M_r$ and
$$
\operatorname{div}Y=\sum_{j=1}^3\langle d_{E_j}Y,E_j\rangle_{\mathbb{R}^6}=0,\qquad Y\in\{F,G,[F,G]\}
$$
where $d_XY(p)=\lim_{t\to 0}t^{-1}[Y(p+tX)-Y(p)]$ and so the hypothesis $(\bf{D})$ is holds. Next, if $[G,F]f=Gf=0$ on $M_r$ for some $f\in C^\infty(M_r)$ then $f$ is constant along any integral curve $\dot\gamma=[G,F]\gamma$ or $\dot\delta=G\delta$. Since
$$
\operatorname{Rng}\gamma=\{(p,\xi)\in M_r:p=\gamma_1(0)\}\quad\textrm{and}\quad\operatorname{Rng}\delta=\{(p,\xi)\in M_r:\xi=\delta_2(0)\},
$$
any two points in $M_r$ can be connected by a piecewise-smooth curve consisting of at most three integral curves of $[G,F]$ and $G$ hence the hypothesis $(\bf{F})$ is satisfied.

Finally, let $C_l$ denote the space spanned by polynomials on $\mathbb{R}^6$ of order smaller or equal to $l$. Then $F[C_l]\cup G[C_l]\subseteq C_l$ hence $C_l$ is invariant for the operator $\mathcal A$ defined in (\ref{generator}) and $\bigcup_{l=0}^\infty C_l$ is dense in $C(M_r)$ by the Stone-Weierstrass theorem and the hypothesis $(\bf{C})$ holds.

Let $\mu_r$ be the probability measure on $T\mathbb{S}^2$ supported on $M_r$ for $r>0$ such that the restriction of $\mu_r$ on $\mathscr B(M_r)$ coincides with the normalized Riemannian volume measure on $M_r$. If $\nu$ is a probability measure on $T\mathbb{S}^2$, we define the measures
$$
\nu_\ast(U)=\nu\,\{(p,\xi)\in T\mathbb{S}^2:|\xi|\in U\},\qquad U\in\mathscr B(0,\infty)
$$
$$
\bar\nu(A)=\nu\,(A\cap M_0)+\int_{(0,\infty)}\mu_r(A)\,d\nu_\ast,\qquad A\in\mathscr B(T\mathbb{S}^2).
$$

\begin{theorem}
Let $(u,\dot u)$ be a solution of (\ref{secord}) on $T\mathbb{S}^2$ with an initial distribution $\nu$ on $\mathscr B(T\mathbb{S}^2)$.
Then the laws of $(u,\dot u)$ converge weakly to $\bar\nu$ as $t\to\infty$. Moreover, $\nu$ is invariant for (\ref{secord}) 
iff $\nu=\bar\nu$ and $\nu$ is ergodic for (\ref{secord}) iff $\nu=\delta_{(p,0)}$ 
for some $p\in\mathbb{S}^2$ or $\nu=\mu_r$ for some $r>0$.
\end{theorem}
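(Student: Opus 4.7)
The central observation is that $|\dot u|$ is a pathwise invariant of the flow: both $F(p,\xi) = (\xi,-|\xi|^2 p)$ and $G(p,\xi) = (0, p\times \xi)$ preserve $|\xi|$ (the first because $\langle \xi, p\rangle = 0$ on $T\mathbb{S}^2$, the second because $p\times \xi \perp \xi$), and since (\ref{secord}) is Stratonovich, Itô's formula in Stratonovich form gives $d|\dot u|^2 = 0$. Consequently each leaf $M_r$ from (\ref{thesetmr}) is almost-surely invariant for $r > 0$, and the set $M_0 = \mathbb{S}^2\times\{0\}$ consists of stationary points (there both $F$ and $G$ vanish). The first step of the proof is thus to record this conservation law and reduce the analysis to the dynamics restricted to each leaf.

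On each $M_r$ with $r > 0$, the preceding paragraphs in Section~\ref{ExamII} have already verified hypotheses $(\mathbf{H}), (\mathbf{D}), (\mathbf{F}), (\mathbf{C})$ for the triple $F|_{M_r}, G|_{M_r}$; by Theorem~\ref{thm-2} the law of $(u(t),\dot u(t))$ converges weakly to the normalized Riemannian volume measure on $M_r$, which is exactly $\mu_r$. On $M_0$ the law is stationary. To assemble these into convergence to $\bar\nu$ on the whole of $T\mathbb{S}^2$, I would disintegrate the initial distribution $\nu$ along the Lipschitz map $(p,\xi) \mapsto |\xi|$: by the Markov property and leafwise invariance, for any $f \in C_b(T\mathbb{S}^2)$ one has
\begin{equation*}
\mathbb{E}\,f(u(t),\dot u(t)) = \int_{M_0} f\,d\nu + \int_{(0,\infty)} \mathbb{E}^{(p,\xi)} f(u(t),\dot u(t)) \, d\nu(p,\xi),
\end{equation*}
and applying Theorem~\ref{thm-2} inside the integral (after conditioning on $|\xi| = r$) yields, by dominated convergence, the weak limit $\int_{M_0} f\,d\nu + \int_{(0,\infty)} \int f\,d\mu_r \,d\nu_*(r) = \int f\,d\bar\nu$. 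Some care is needed because Theorem~\ref{thm-2} is formulated on a single fixed manifold; one checks that the convergence $P_t^{(r)} f \to \int f\,d\mu_r$ is uniform in the starting point by the Feller property together with compactness, which justifies the integration in $r$.

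The characterization of invariant measures follows from this convergence theorem together with Theorem~\ref{thm-2.8} applied leafwise: any invariant $\nu$ satisfies $\nu = \lim P_t^*\nu = \bar\nu$, and conversely $\bar\nu$ is invariant because it is a convex combination of the invariant measures $\mu_r$ (for $r>0$) and the invariant Dirac measures $\delta_{(p,0)}$ (for $r=0$). For ergodicity, one identifies the extreme points of the convex set of invariant probabilities: the map $\nu \mapsto \nu_*$ is an affine bijection onto probability measures on $[0,\infty)$ (its inverse being $\theta \mapsto \delta\text{-or-}\mu\text{-mixture}$ in the obvious way), and the extreme points of the set of probability measures on $[0,\infty)$ are the Dirac masses. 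The preimage of $\delta_r$ for $r > 0$ is $\mu_r$; the preimage of $\delta_0$ is the set of probability measures on $M_0 \cong \mathbb{S}^2$, all of which are invariant, and whose extreme points are $\{\delta_{(p,0)} : p \in \mathbb{S}^2\}$. The main subtlety in this last step is combining extremality in the two regimes ($r=0$ versus $r>0$) into a single statement; I expect this combinatorial bookkeeping, rather than any analytic ingredient, to be the most delicate point.
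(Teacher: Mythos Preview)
Your approach is essentially the same as the paper's: leafwise invariance of $|\dot u|$, application of Theorem~\ref{thm-2} on each $M_r$ for the attractivity, and identification of ergodic measures as extremal invariant ones. The paper records the parametrization of invariant measures more cleanly in Remark~\ref{erg} (by a pair $(a,b)$ of finite measures on $M_0$ and on $(0,\infty)$), which is exactly the structure you describe.

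One small inconsistency: you call $\nu\mapsto\nu_\ast$ an ``affine bijection'' onto probabilities on $[0,\infty)$, but in the next sentence you correctly note that the preimage of $\delta_0$ is the whole set of probability measures on $M_0$, so it is not injective. The right statement, which you effectively use, is that invariant measures are in bijection with pairs $(a,b)$ as in Remark~\ref{erg}; extremality then reduces to extremality of $a$ among probabilities on $M_0$ (giving $\delta_{(p,0)}$) or of $b$ among probabilities on $(0,\infty)$ (giving $\mu_r$). Also, you do not need uniformity of the leafwise convergence to pass to the limit under the $\nu$-integral: pointwise convergence of $P_tf(x)$ in $x$ together with the bound $|P_tf|\le\|f\|_\infty$ already suffices for dominated convergence.
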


\begin{proof} The spaces $\{(p,0)\}_{p\in\mathbb{S}^2}$ and $M_r$ for $r>0$ are invariant for (\ref{secord}). Hence, if $f\in C(T\mathbb{S}^2)$ then $\int_{T\mathbb{S}^2}f\,dp_{(t,x)}$ converges to $f(p,0)$ if $x=(p,0)$ or it converges to $\int_{T\mathbb{S}^2}f\,d\mu_r$ if $x=(p,\xi)$ and $|\xi|=r>0$ by Theorem \ref{thm-2} since $\mu_r$ is the unique invariant and also attractive probability measure for (\ref{secord}) on $M_r$. The ergodicity follows from Remark \ref{erg} as ergodic probability measures are the extremal points of the set of all invariant probability measures (see e.g. Proposition 3.2.7 in \cite{daza}).
\end{proof}

\begin{remark}\label{erg} Invariant measures for (\ref{secord}) can be uniquely described as measures
$$
\nu_{a,b}(A)=a\,(A\cap M_0)+\int_{(0,\infty)}\mu_r(A)\,db,\qquad A\in\mathscr B(T\mathbb{S}^2)
$$
where $a$ and $b$ are finite measures on $\mathscr B(M_0)$ and on $\mathscr B(0,\infty)$ so that $a(M_0)+b(0,\infty)=1$.
\end{remark}
\subsection{Numerical approximation}\label{napprox2}
We propose a non-dissipative, symmetric discretization of (\ref{secord}) to construct
strong solutions and numerically study long-time asymptotics. Let $\{ (U^n, V^n)\}_{n}$ be
approximate iterates of $\{(u(t_n), \dot{u}(t_n))\}_n$ on an equi-distant mesh $I_k$ of size $k>0$,
covering $[0,T]$. We denote $d_t \varphi^{n+1} := \frac{1}{k} (\varphi^{n+1} - \varphi^n)$. \\

\textbf{Algorithm B.} Let $(U^0, V^0) := \bigl(u_0, \dot{u}(0)\bigr)$, and $U^{-1} := U^0 - k V^0$.
For $n \geq 0$, find $(U^{n+1}, V^{n+1}, \lambda^{n+1}) \in {\mathbb R}^{3+3+1}$,
such that for $\Delta W_{n+1} := W(t_{n+1}) - W(t_n) \sim {\mathscr N}(0,k)$ holds

\begin{eqnarray}\nonumber
V^{n+1}-V^n &=& k \frac{\lambda^{n+1}}{2} (U^{n+1} + U^{n-1}) + \frac{1}{4}(U^{n+1} + U^{n-1}) \times
(V^{n+1} + V^{n}) \Delta W_{n+1} \\ \label{forma1}
d_t U^{n+1} &=& V^{n+1} \\ \nonumber
 \lambda^{n+1} &=&
\left\{ \begin{array}{ll}
0 & \mbox{ for } \frac{1}{2}(U^{n+1} + U^{n-1}) = 0\,, \\
- \frac{(V^n, V^{n-1})}{\vert \frac{1}{2}(U^{n+1} + U^{n-1})\vert^2}
 & \mbox{ for } \frac{1}{2}(U^{n+1} + U^{n-1}) \neq 0 \mbox{ and } n \geq 1\,,  \\
 - \frac{(V^0, V^{1}) - \frac{1}{2} \vert V^0\vert^2}{\vert \frac{1}{2}(U^{n+1} + U^{n-1})\vert^2}
 & \mbox{ for } \frac{1}{2}(U^{1} + U^{-1}) \neq 0 \mbox{ and } n = 0\,.
\end{array}  \right.
\end{eqnarray}
The choice of the Lagrange multiplier $\lambda^{n+1}$ ensures that $\vert U^{n+1}\vert = 1$ for $n \geq 0$; the
case $n=0$ has to compensate for the fact that the defined $U^{-1}$ is not
necessarily of unit length.

To see the formula (\ref{forma1})$_3$ for $n \geq 1$, we multiply
(\ref{forma1}) with $\frac{1}{2}(U^{n+1} + U^{n-1})$ and use the discrete product formula
$$ (d_t \varphi^n, \psi^n) = - (\varphi^{n-1}, d_t \psi^n) + d_t (\varphi^n, \psi^n)$$
to find
$$ \frac{1}{2} (d_t V^{n+1}, U^{n+1} + U^{n-1}) = -\frac{1}{2} (V^n, V^{n+1}+V^{n-1}) +
\frac{1}{2} d_t \bigl( V^{n+1}, U^{n+1} + U^n - U^n + U^{n-1}\bigr)\, ,$$
where $(\cdot, \cdot)$ denotes the scalar product in ${\mathbb R}^3$, and $\vert \cdot \vert =
(\cdot, \cdot)^{1/2}$.
Since $(V^{n+1}, U^{n+1} + U^{n}) = 0$, we further obtain
\begin{eqnarray*}
&=& -\frac{1}{2} \Bigl( (V^n, V^{n+1} + V^{n-1}) + k d_t(V^{n+1},-V^n)\Bigr) \\
&=& -\frac{1}{2} \Bigl( (V^n, V^{n+1} + V^{n-1}) - (V^{n+1},V^n) + (V^n, V^{n-1})\Bigr) = -(V^n, V^{n-1})\, .
\end{eqnarray*}
Hence $-(V^n, V^{n-1}) = \lambda^{n+1} \vert \frac{1}{2}(U^{n+1} + U^{n-1})\vert^2$, which yields the formula for
$\lambda^{n+1}$ in (\ref{forma1}).

For $n = 0$, we conclude similarly, using $\langle U^0, V^0\rangle = 0$, and the definition of
$U^{-1}$.

\begin{theorem}\label{numscheme1}
Let $T >0$, and $k \leq k_0(U^0,V^0)$ be sufficiently small. For every $n \geq 0$, there
exist unique ${\mathbb R}^{3+3}$-valued random variables $(U^{n+1},V^{n+1})$ of
Algorithm A such that $\vert U^{n+1} \vert = 1$
for all $n \geq 0$, and
$$ E(V^{n+1}) = E(V^0) \qquad \mbox{where } E(\varphi) = \frac{1}{2} \vert \varphi\vert^2\, .$$
Moreover, $\{ (U^{n+1}, V^{n+1})\}_{n\geq 0}$ construct strong solutions of (\ref{forma1}) for
$k \rightarrow 0$, in the way specified
in the proof below.
\end{theorem}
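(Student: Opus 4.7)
The plan is to prove the theorem in four stages: (i) existence and uniqueness of the iterates by a Banach contraction argument at each time step; (ii) verification of the pointwise sphere constraint $|U^{n+1}|=1$ by induction; (iii) energy conservation as a consequence of the sphere constraint; and (iv) convergence as $k\to 0$ via tightness, a Skorokhod representation, and pathwise uniqueness.

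For step (i), I use $U^{n+1}=U^n+kV^{n+1}$ from \eqref{forma1}$_2$ to eliminate $U^{n+1}$ and substitute the explicit prescription of $\lambda^{n+1}$ from \eqref{forma1}$_3$ into \eqref{forma1}$_1$, reducing the time step to a nonlinear fixed-point problem $V^{n+1}=\Phi_n(V^{n+1})$ on $\mathbb{R}^3$. For $k\le k_0(U^0,V^0)$ sufficiently small, $\Phi_n$ is a strict contraction on a closed ball whose radius is comparable to $|V^n|+|\Delta W_{n+1}|$ (the denominators in $\lambda^{n+1}$ are bounded away from zero because at $k=0$ one has $\frac12(U^{n+1}+U^{n-1})=U^n$ with $|U^n|=1$); Banach's theorem yields a unique $V^{n+1}$, from which $U^{n+1}$ and $\lambda^{n+1}$ follow by substitution. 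The case $n=0$ requires the modified formula for $\lambda^1$ together with the auxiliary definition $U^{-1}=U^0-kV^0$.

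For step (ii), the key observation is that the formula \eqref{forma1}$_3$ for $\lambda^{n+1}$ was \emph{derived} in the text under the assumption $(V^{n+1},U^{n+1}+U^n)=0$, i.e.\ $|U^{n+1}|=|U^n|$. I therefore consider the alternative constrained problem of finding $(U,V,\mu)\in\mathbb{R}^{3+3+1}$ solving \eqref{forma1}$_1$--\eqref{forma1}$_2$ together with the hard constraint $|U|=1$. The implicit function theorem at $k=0$ gives a unique solution for $k$ small, and the derivation already presented in the excerpt shows that the associated $\mu$ equals the value prescribed by \eqref{forma1}$_3$. By the uniqueness from step (i) the constrained solution coincides with the iterate $(U^{n+1},V^{n+1},\lambda^{n+1})$, giving $|U^{n+1}|=1$. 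The induction starts at $|U^0|=1$; the nonstandard base case $n=0$ uses $|U^{-1}|^2=1+k^2|V^0|^2$ (since $(U^0,V^0)=0$) and exploits the fact that the correction $-\tfrac12|V^0|^2$ appearing in the formula for $\lambda^1$ is exactly what the analogous constrained derivation produces. Energy conservation (step (iii)) then follows by taking the inner product of \eqref{forma1}$_1$ with $V^{n+1}+V^n=k^{-1}(U^{n+1}-U^{n-1})$: the cross-product term vanishes and the multiplier term reduces to $\tfrac{\lambda^{n+1}}{2}(|U^{n+1}|^2-|U^{n-1}|^2)$, which is zero for $n\ge 1$ by step (ii); for $n=0$ the special form of $\lambda^1$ is used analogously.

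For step (iv), the deterministic bounds $|U^n|\equiv 1$ and $|V^n|\equiv|V^0|$ provide uniform $L^\infty$ control of the piecewise-affine interpolants, from which tightness on $C([0,T];T\mathbb{S}^2)$ follows by a Kolmogorov--Centsov estimate using $\mathbb{E}|\Delta W_{n+1}|^{2p}=O(k^p)$. Prokhorov's theorem and a Skorokhod representation yield a subsequence converging almost surely on a new probability space; passing to the limit in the discrete equation and identifying the Stratonovich correction generated by the symmetric midpoint rule $\tfrac12(V^{n+1}+V^n)$ shows the limit solves \eqref{secord}. Pathwise uniqueness on each energy sphere $M_r$ from \eqref{thesetmr} --- which holds because the coefficients of \eqref{secord} are smooth and $M_r$ is compact --- upgrades convergence in law to strong convergence via a Yamada--Watanabe argument, and the whole sequence converges. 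The principal obstacle is step (ii): the multiplier formula in \eqref{forma1}$_3$ was obtained by \emph{assuming} the constraint, and closing the logical loop requires either the uniqueness-plus-constrained-problem argument sketched above or a delicate direct induction that must separately handle the nonstandard initialisation $|U^{-1}|\ne 1$.
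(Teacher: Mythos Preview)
Your step (i) has a genuine gap. Writing the time step as a fixed-point equation $V^{n+1}=\Phi_n(V^{n+1})$ and appealing to Banach's theorem requires the Lipschitz constant of $\Phi_n$ to be strictly less than $1$. But after substituting $U^{n+1}=U^n+kV^{n+1}$, the noise term in \eqref{forma1}$_1$ contributes
\[
\frac14(U^n+U^{n-1})\times V^{n+1}\,\Delta W_{n+1}
\]
to $\Phi_n(V^{n+1})$, and this is linear in $V^{n+1}$ with operator norm of order $|\Delta W_{n+1}|$ --- not of order $k$. Since $\Delta W_{n+1}\sim\mathscr N(0,k)$ is almost surely unbounded over $\Omega$, no deterministic threshold $k_0(U^0,V^0)$ can make $\Phi_n$ a contraction for every $\omega$. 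The same obstruction affects your implicit-function-theorem argument in step (ii): the Jacobian of the constrained system with respect to $V^{n+1}$ contains the same unbounded noise coefficient, so invertibility cannot be guaranteed uniformly in $\omega$.

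The paper avoids this by using Brouwer's fixed point theorem rather than a contraction. It reformulates the step as $\mathcal F^\omega_\epsilon(W)=0$ in the variable $W=\tfrac12(U^{n+1}+U^{n-1})$ (with an $\epsilon$-regularisation of the denominator in $\lambda^{n+1}$), and tests with $W$ itself. The crucial structural point is that the stochastic term is a cross product $W\times(\cdot)$ and therefore orthogonal to $W$, so $(\mathcal F^\omega_\epsilon(W),W)\ge 0$ on a large sphere \emph{regardless of the size of $\Delta W_{n+1}$}. Brouwer then yields existence for every $\omega$; a discrete Gronwall bound on $|V^{n+1}|$ shows a posteriori that $|W|>\tfrac12$, so the regularisation can be removed and the sphere constraint and energy identity follow directly. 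Uniqueness is obtained separately by an energy/Gronwall argument, not by contraction.

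Your convergence step (iv) via Prokhorov--Skorokhod--Yamada--Watanabe would work but is heavier than necessary. Because the scheme gives the \emph{pathwise} deterministic bounds $|U^n|\equiv 1$ and $|V^n|\equiv|V^0|$, the paper applies Arzel\`a--Ascoli directly, $\mathbb P$-almost surely, to the interpolants; the Stratonovich correction is identified by substituting \eqref{forma1}$_1$ back into the stochastic integral (your identification via the symmetric midpoint rule amounts to the same computation). Passing to a new probability space is unnecessary here.
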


Solvability of (\ref{forma1}) will be shown by an inductional argument that is based on Brouwer's fixed point theorem:
an auxiliary problem is introduced which excludes the case where $\frac{1}{2}(U^{n+1}+U^{n-1}) = 0$ when computing $\lambda^{n+1}$; for sufficiently small $k>0$, constructed solutions of the auxiliary problem are
in fact solutions of (\ref{forma1}).
\begin{proof}
{\em 1.~Auxiliary problem.} Fix $n \geq 1$. For every $0 < \epsilon \leq \frac{1}{4}$, define the continuous function ${\mathcal F}^{\omega}_{\epsilon}: {\mathbb R}^3 \rightarrow {\mathbb R}^3$ where
\begin{equation}\label{forma2}
{\mathcal F}^{\omega}_{\epsilon}(W) := \frac{2}{k}(W-U^n) + k \frac{(V^n, V^{n-1})}{\max\{ \vert W\vert^2, \epsilon\}} W - W \times (V^n - \frac{2}{k} U^n) \Delta W_{n+1}\, .
\end{equation}
We compute respectively,
\begin{eqnarray*}
\frac{2}{k} ( W-U^n,W) &\geq& \frac{2}{k} \bigl( \vert W\vert - \vert U^n\vert \bigr) \vert W\vert\,, \\
k \frac{(V^n, V^{n-1})}{\max\{ \vert W\vert^2,\epsilon\}} \vert W \vert^2 &\geq& - k \vert V^n\vert\, \vert V^{n-1}\vert\, .
\end{eqnarray*}
Since the stochastic term in (\ref{forma2}) vanishes after multiplication with $W$, there exists some
function $R_n >0$ such that
$$ \bigl( {\mathcal F}^{\omega}_{\epsilon}(W), W \bigr) \geq 0 \qquad \forall\, W \in
\bigl\{\varphi \in {\mathbb R}^3:\, \vert \varphi \vert \geq R_n(U^n, V^n, V^{n-1})\bigr\}\, .$$
Then, Brouwer's fixed point theorem implies the existence of $W^{\star} \equiv \frac{1}{2}(U^{n+1} + U^{n-1})$, such that
${\mathcal F}^{\omega}_{\epsilon}\bigl( \frac{1}{2}(U^{n+1} + U^{n-1})\bigr) = 0$ for all $\omega \in \Omega$.

The argument easily adopts to the case $n=0$.

{\em 2.~Solvability and energy identity.} We show that $\frac{1}{2}(U^{n+1}+U^{n-1})$ solves ${\mathcal F}^{\omega}_{0} \bigl(\frac{1}{2}(U^{n+1} +
U^{n-1})\bigr) = 0$. By induction, it suffices to verify that
\begin{eqnarray}\nonumber
\vert \frac{1}{2}(U^{n+1}+U^{n-1})\vert &=& \vert \frac{k}{2} V^{n+1} + \frac{1}{2}(U^{n} + U^{n-1})\vert
\geq \vert \frac{1}{2}(U^n + U^{n-1})\vert - \frac{k}{2} \vert V^{n+1}\vert \\ \nonumber
&\geq& \vert U^{n-1}\vert - \frac{k}{2} \bigl(\vert V^n\vert + \vert V^{n+1}\vert\bigr) \\ \label{formb1}
&\geq& 1-\frac{1}{4} - \frac{k}{2} \vert V^{n+1}\vert \stackrel{!}{>} \frac{1}{2}\, ,
\end{eqnarray}
for $k \leq k_0(U^0,V^0) < 1$ sufficiently small.

Let $n \geq 1$. For all $0 \leq \ell \leq n$, there holds $\vert U^{\ell}\vert = 1$, and
\begin{equation}\label{formb2}
E(V^{\ell}) = E(V^0) \, .
\end{equation}
Then $W^{\star} = \frac{1}{2}(U^{n+1} + U^{n-1})$ from Step 1.~solves
\begin{equation}\label{forma3}
k d_t V^{n+1} = \frac{\lambda^{n+1}_{\epsilon} k }{2} (U^{n+1} + U^{n-1}) + \frac{1}{4}(U^{n+1}+U^{n-1}) \times
(V^{n+1}+V^n) \Delta W_{n+1}\,,
\end{equation}
where
$$ \lambda^{n+1}_{\epsilon} = - \frac{(V^n,V^{n-1})}{\max\{\epsilon, \vert \frac{1}{2}(U^{n+1}+
U^{n-1})\vert^2 \}}\, .$$
Testing (\ref{forma3}) with $\frac{1}{2}(U^{n+1}-U^{n-1}) = \frac{k}{2}(V^{n+1}+V^{n})$, and using binomial
formula $\frac{1}{2}(U^{n+1}+U^{n-1}, U^{n+1}-U^{n-1}) = \frac{1}{2} \bigl(\vert U^{n+1}\vert^2-1\bigr)$,
as well as $\vert U^{n+1}\vert^2 \leq k^2 \vert V^{n+1}\vert^2 + \vert U^n\vert^2$, and the
inductive assumption $\vert U^n\vert^2 = 1$,
\begin{eqnarray}\nonumber
d_t  \vert V^{n+1}\vert^2 &\leq& \frac{\vert \lambda^{n+1}_{\epsilon}\vert}{4} k^2 \vert V^{n+1}\vert^2 \\
\label{einschu1}
&\leq& \frac{\vert V^n\vert\, \vert V^{n-1}\vert\, k^2 \vert V^{n+1}\vert^2}{4\max\{ \epsilon,
(1-\frac{1}{4}-k\vert V^{n+1}\vert)^2\}} \leq \frac{k^2}{4\epsilon}
\vert V^n\vert\, \vert V^{n-1}\vert\, \vert V^{n+1}\vert^2 \,,
\end{eqnarray}
for $\epsilon \leq \frac{1}{2}$. By a (repeated) use of the discrete version of Gronwall's inequality, there
exists a constant $C>0$ independent on time $T > 0$, such that
\begin{equation}\label{einschu2}
\vert V^{n+1}\vert^2 \leq C\, \vert V^0\vert^2\, .
\end{equation}
As a consequence, (\ref{formb1}) is valid, and hence
${\mathcal F}^{\omega}_{\epsilon}\bigl( \frac{1}{2}(U^{n+1} + U^{n-1})\bigr) = 0$ for $\epsilon = 0$;
therefore,
$U^{n+1}$ solves (\ref{forma1}), satisfies the sphere constraint, and conserves the
Hamiltonian, i.e., (\ref{formb2}) holds for $0 \leq \ell \leq n+1$.

For $n=0$, we argue correspondingly, starting in (\ref{einschu1}) with
$$d_t \vert V^1 \vert^2 \leq C \lambda_\epsilon^1 \vert U^{1} + U^{-1}\vert^{2} \leq
\frac{k \vert V^0 \vert (\vert V^0\vert + \vert V^{1}\vert )^2}{\max\{ \epsilon, \vert \frac{1}{2} (U^{1} + U^{-1})\vert\} }\, ,$$
from which we again infer (\ref{einschu2}), and (\ref{formb1}).

Uniqueness of $\bigl\{ (U^n, V^n)_n\bigr\}_n$ follows by an energy argument, using (\ref{formb1}),
(\ref{formb2}), $k \leq k_0$, and the discrete version of Gronwall's inequality.

{\em 3.~Convergence.} We rewrite (\ref{forma1}) in the form
\begin{eqnarray}\nonumber
d v &=&- \vert v\vert^2 u \, dt + u \times v \, \circ d W\,, \\ \label{formb3}
d u &=& v dt \,,\\ \nonumber
(u_0,v_0)&\in&T{\mathbb S}^2\, .
\end{eqnarray}
For ${\mathbb R}^3$-valued iterates $\{\varphi^n\}_{n \geq 0}$ on the mesh $I_k$ that covers
$[0,T]$ we define for every $t \in [t_n, t_{n+1})$ functions
\begin{eqnarray*}
{\mathscr \varphi}_k(t) &:=& \frac{t-t_n}{k} \varphi^{n+1} + \frac{t_{n+1}-t}{k} \varphi^n\,, \\
{\mathscr \varphi}_k^-(t) &:=& \varphi^n\,, \qquad \mbox{and } {\mathscr \varphi}^+_k(t) := \varphi^{n+1}\, .
\end{eqnarray*}
We now show that corresponding functions of Algorithm~A satisfy ${\mathbb P}$-almost surely
\begin{equation}\label{forma4}
({\mathscr U}^{\pm}_{k'}, {\mathscr V}_{k'}^{\pm}) \rightarrow (u,v) \qquad \mbox{in }
C \bigl([0,T]; {\mathbb R}\bigr)
\qquad (k' \rightarrow 0)\, ,
\end{equation}
where $(u,v)$ is strong solution of (\ref{forma1}).

It is because of the discrete sphere constraint and the (discrete) energy identity that sequences
$$ \bigl\{ ({\mathscr U}^{\pm}_k, {\mathscr V}^{\pm}_k)\bigr\}_k \subset C \bigl([0,T]; {\mathbb R}\bigr)$$
are uniformly bounded. Moreover, there holds for all $t \geq 0$
\begin{eqnarray}\nonumber
{\mathscr V}(t) &=& {\mathscr V}(0) + \int_0^{t^+} \frac{{\mathscr \lambda}^+}{2} [{\mathscr U}^+ + {\mathscr U}^- -
k {\mathscr V}^-]\, ds \\ \label{forma4a}
&& + \frac{1}{4} \int_0^{t^+} ({\mathscr U}^+ + {\mathscr U}^- - k {\mathscr V}^-) \times ({\mathscr V}^+ + {\mathscr V}^-)\, dW(s)\, , \\ \nonumber
{\mathscr U}(t) &=& {\mathscr U}(0) + \int_0^{t^+} {\mathscr V}(s)\, ds\, .
\end{eqnarray}
Then, by (\ref{forma4a})$_2$, (\ref{formb1}), and H\"older continuity property of $W$, sequences
$\bigl\{ ({\mathscr U}_k, {\mathscr V}_k)\bigr\}_{k}$ are equi-continuous. Hence, by Arzela-Ascoli theorem, there exist sub-sequences $\bigl\{ ({\mathscr U}_{k'}, {\mathscr V}_{k'})\bigr\}_{k'}$,
and continuous processes $({\mathscr U}, {\mathscr V})$ on $[0,T]$ such that
\begin{equation}\label{formb5}
\Vert {\mathscr U}_{k'} -u\Vert_{C([0,T]; {\mathbb R}^3)} +
                           \Vert {\mathscr V}_{k'} -v\Vert_{C([0,T]; {\mathbb R}^3)} \rightarrow 0
                           \qquad (k' \rightarrow 0) \qquad {\mathbb P}-\mbox{a.s.}
\end{equation}
We identify limits in (\ref{forma4a}). The only crucial term is the stochastic (Ito) integral term which may be
stated in the form
\begin{equation}\label{bb1}
\frac{1}{2} \int_0^{t^+} ({\mathscr U}^+ + {\mathscr U}^- - k {\mathscr V}^-) \times
({\mathscr V}^- + \frac{k}{2} \dot{\mathscr V})\, dW(s)\, .
\end{equation}
We easily find for every $t \in [0,T]$,
\begin{equation*}
\frac{1}{2} \int_0^{t^+} ({\mathscr U}^+ + {\mathscr U}^- - k {\mathscr V}^-) \times
{\mathscr V}^-\, {\rm d}W(s) \rightarrow \int_0^t u\times v\, dW(s)
\qquad (k \rightarrow 0) \qquad {\mathbb P}-\mbox{a.s.}
\end{equation*}
The remaining term in (\ref{bb1}) involves $\frac{k}{2}\dot{\mathscr V}$, which will be substituted by
identity (\ref{forma1})$_1$,
\begin{eqnarray}\nonumber
&& \frac{1}{2} \int_0^{t^+} ({\mathscr U}^+ + {\mathscr U}^- - k {\mathscr V}^-) \times \Bigl(
{\mathscr V}^- + k \frac{\lambda^+}{4} ({\mathscr U}^+ + {\mathscr U}^- - k {\mathscr V}^-) \\
\label{forma4c}
&&\qquad
+ \frac{1}{4} ({\mathscr U}^+ + {\mathscr U}^- - k {\mathscr V}^-) \times
({\mathscr V}^- + \frac{k}{2} \dot{\mathscr V}) \Delta W_{n+1}
\Bigr)\, dW(s)\, .
\end{eqnarray}
If compared to (\ref{bb1}), the critical factor $\frac{k}{2} \dot{\mathscr V}$ is now scaled by an
additional $\Delta W_{n+1}$; using again (\ref{forma1})$_1$, Ito's isometry, and the estimate
${\mathbb E} \vert \Delta W_{n+1}\vert^{2^p} \leq Ck^{2^{p-1}}$ then lead to
\begin{eqnarray}\nonumber
&&\frac{1}{8} \int_0^{t^+} ({\mathscr U}^+ + {\mathscr U}^-) \times \Bigl(
({\mathscr U}^+ + {\mathscr U}^- - k {\mathscr V}^-) \times ({\mathscr V}^-
+ \frac{k}{2} \dot{\mathscr V})
\Bigr) (W^+ - W^-)\, dW(s) \\ \label{klaa}
&&\qquad \rightarrow \frac{1}{2} \int_0^t u \times (u \times v)\, ds \qquad
(k \rightarrow 0) \qquad {\mathbb P}-\mbox{a.s.},
\end{eqnarray}
for all $t \in [0,T]$.
As a consequence, there holds
$$v(t) = v(0) + \int_0^t \vert v\vert^2 u\, ds + \int_0^t u \times v\, dW(s) +
\frac{1}{2} \int_0^t u \times (u \times v)\, ds\,,$$
where the last term is the Stratonovich correction.
The proof is complete.\end{proof}
\begin{remark}\label{am1}
1.~Let $\vert V^0\vert$ be constant, and $({\mathscr V}^{n+1}, {\mathscr U}^{n+1}) :=
({\mathbb E} V^{n+1}, {\mathbb E} U^{n+1})$. Then
\begin{equation}\label{c0}
\Bigl\vert {\mathscr V}^{n+1} - {\mathscr V}^n - k \bigl[ {\mathbb E}\vert V^{n}\vert^2
{\mathscr U}^{n+1}  - \frac{1}{2} {\mathscr V}^{n+1}\bigr] \Bigr\vert \leq C k^2\, ,
\end{equation}
i.e., the stochastic forcing term exerts a damping in direction ${\mathscr V}^{n+1}$. To show this result,
we start with
\begin{eqnarray}\nonumber
{\mathscr V}^{n+1} - {\mathscr V}^n &=& \frac{k}{2} {\mathbb E} \bigl[ \lambda^{n+1} (U^{n+1} + U^{n-1})\bigr]
+ \frac{1}{2} {\mathbb E} \bigl[ (U^{n+1} + U^{n-1}) \times V^{n+1/2} \Delta W_{n+1} \bigr] \\ \label{coa}
&=:& I + II\, .
\end{eqnarray}
We use Theorem~\ref{numscheme1}, and an approximation argument to conclude that
\begin{eqnarray*}
I &=& - \frac{k}{2} \, {\mathbb E} \bigl[ \langle V^n, V^{n-1}\rangle \Bigl( 1 - [1- \frac{1}{\vert \frac{1}{2}(U^{n+1}
+ U^{n-1})\vert^2}]\Bigr) (U^{n+1} + U^{n-1})\bigr] \\
&= & - \frac{k}{2} \, {\mathbb E} \bigl[ \langle V^n, V^{n-1}\rangle ({\mathscr U}^{n+1} + {\mathscr U}^{n-1})\bigr]
+ {\mathcal O}(k^3) \\
&=& - k\,  {\mathbb E}  \vert V^n\vert^2 {\mathscr U}^{n+1} + {\mathcal O}(k^2)\, ,
\end{eqnarray*}
thanks to the power property of expectations, and $\bigl\vert \vert \frac{1}{2}(U^{n+1} + U^{n-1})\vert^2 -1\bigr\vert \leq Ck^2$.

We use the identity $U^{n+1} = kV^{n+1} + U^n$ for the leading term in $II$, and properties of the vector product to conclude that
\begin{equation*}
II = \frac{k}{4} {\mathbb E} \bigl[ (V^{n+1}-V^n) \times V^{n}\Delta W_{n+1}\bigr] + \frac{1}{4} {\mathbb E} \bigl[ (U^{n} + U^{n-1}) \times
(V^{n+1} - V^n) \Delta W_{n+1}\bigr] =: II_a + II_b\, .
\end{equation*}
We easily verify $\vert II_a \vert \leq Ck^2$, thanks to (\ref{forma1})$_1$, and properties of iterates given
in Theorem~\ref{numscheme1}. For $II_b$, we use (\ref{forma1})$_1$ as well, and the relevant term is then
\begin{eqnarray*}
&& \frac{1}{16} {\mathbb E} \Bigl[ (U^n + U^{n-1}) \times \bigl( (U^{n+1} + U^{n-1}) \times (V^{n+1} + V^n)\bigr)\vert \Delta_{n+1}\vert^2\Bigr] \\
 &&\qquad = \frac{1}{2} {\mathbb E} \Bigl[ U^{n} \times \bigl( U^{n} \times V^{n}\bigr)\vert \Delta_{n+1}\vert^2\Bigr] + {\mathcal O}(k^2) \\
 &&\qquad = - \frac{k}{2} {\mathscr V}^{n} + {\mathcal O}(k^2)
 = - \frac{k}{2} {\mathscr V}^{n+1} + {\mathcal O}(k^2)\, ,
 \end{eqnarray*}
thanks to the power property of expectations, earlier boundedness results of iterates $\{ (U^n, V^n)\}_n$,
the fact that $\vert U^n\vert = 1$ for all $n \geq 0$, the cross product formula $a \times (b \times c) =
b \langle a, c\rangle - c \langle a, b\rangle$, and another approximation argument. This observation then
settles (\ref{c0}).

2.~Strong solutions of (\ref{secord}) satisfy $$\vert u(t)\vert = 1\,, \qquad E \bigl(v(t)\bigr) = E(v_0) \qquad \forall\, t \in [0,T]\,,$$
and are unique, due to Lipschitz continuity of coefficients in (\ref{secord}); hence, the whole sequence
$\bigl\{ ({\mathscr U}_k, {\mathscr V}_k)\bigr\}_k$ converges to $(u,v)$, for $k \rightarrow 0$.

3.~Increments of a Wiener process in Algorithm~A may be approximated by a sequence of general, not necessarily Gaussian random variables, which properly approximate higher moments of $\Delta W_{n+1}$;
martingale solutions of (\ref{secord}) may then be obtained by a more involved argumentation from
Algorithm~A as well, using theorems of Prohorov and Skorokhod; cf.~\cite{bcp1}.
\end{remark}

\subsection{Numerical experiments}\label{nelubo}
In this section we present some numerical
obtained by the {Algorithm B}
that has been applied to a slightly more general problem
than (\ref{secord})
$$
d\dot u=-|\dot u|^2u\,dt+ \sqrt{D}(u\times\dot u)\circ\,dW \, ,
$$
where $D$ is a fixed constant that controls the intensity of the noise term.
The Lagrange multiplier was computed as
\begin{equation}\label{lagr2}
\lambda^{n+1} = \frac{-(V^n,U^{n+1} + U^{n-1}) +
                \frac{1}{2k}(1-|U^{n-1}|^2)}{\left|\frac{1}{2}(U^{n+1} + U^{n-1})\right|^2}\, .
\end{equation}
The above formula is equivalent to the corresponding expression in (\ref{forma1}).
However, the present formulation (\ref{lagr2}) is slightly more convenient for numerical computations,
since it ensures that the round off errors in the constraint $|U^n|=1$ do not accumulate over time.
The solution of the nonlinear scheme (\ref{forma1}) is obtained
up to machine accuracy by a simple fixed-point algorithm, cf. \cite{bpsch}.

The probability density function $\hat{f}^n$ was constructed as in Example~I with
$N=20000$ sample paths.
For all computations in this section we take the time step size $k=0.001$
and the initial conditions $u(0) = (0,1,0)$, $\dot u(0) = (1,0,0)$.
The initial probability density function associated with the above initial conditions is
a Dirac delta function concentrated around $u(0)$.

In Figure~\ref{fig:d1_dens} we display the computed probability density $\hat{f}^n$ for $D=1$, $T=60$
at different time levels.
Initially the probability density function is advected in the direction of the initial velocity
and is simultaneously being diffused. For early times, the diffusion seems to act
predominantly in the direction perpendicular to the initial velocity.
In Figure~\ref{fig:d1_dens} we display the time averaged probability density function $\overline{f}$,
the trajectory $\mathbb{E}(u(t))$ and a zoom at $\mathbb{E}(u(t))$ near the center of the sphere.



The evolution of the probability density for $D=10$, $T=60$ is shown in Figure~\ref{fig:d10_dens}.
Similarly as in the previous experiment the probability density function diffuses and becomes
uniform for large time. Some advection
in the direction of the initial velocity can still be observed,
however, the overall process has a predominantly diffuse character. We observe that the overall
evolution damped due to the effects of the random forcing term, see Remark~\ref{am1} $(1)$
and Figure~\ref{fig:conv}.
In Figure~\ref{fig:d10_dens} we display the
time averaged probability density function, the trajectory
$\mathbb{E}(u(t))$ and a zoom at $\mathbb{E}(u(t))$ near the center.




Figure~\ref{fig:d100_d0.01} contains the computed trajectories of $\mathbb{E}(u(t))$
for $D=0.1$ and $D=100$.  The respective probability densities asymptotically converge towards the uniform distribution
for large time.


In Figure~\ref{fig:conv} we show the graphs of the time evolution of the approximate error
$\mathcal{E}_{\max}^n: t^n \rightarrow \max_{\mathbf{x}\in S^2}|f^n(\mathbf{x}) - {f}^S|$
for $D=0.01, 0.1, 1, 10, 100$.
The quantity $\mathcal{E}_{\max}^n$ serves as an approximation of
the distance from the uniform probability distribution in the $L_{\infty}$ norm.
Note that the oscillations in the error graphs are due to the approximation of the probability density.
The numerical experiments provide evidence
that the probability densities for all $D$ converges towards
the uniform probability density ${f}^{S}$ for $t\rightarrow\infty$.
The probability density evolutions for
decreasing values of $D$ have an increasingly ``advective'' character and the evolutions
for increasing values have an increasingly ``diffusive'' character.
It is also interesting to note, that the convergence
towards the uniform distribution becomes slower for both increasing and decreasing values of $D$.
\begin{figure}[htp]
\center
\caption{Convergence of the probability distribution of $u$ towards
a uniform distribution for different values of the coefficient $D$.}
\label{fig:conv}
\end{figure}

In the last experiment we study the long time behavior of the pair $(u,\dot{u})$ for $D=1$, $N=20000$. Towards this end, we introduce a partition of the manifold $M_1$ defined in \eqref{thesetmr}. First, we consider a partition of the unit sphere into segments $\omega^S_i$, $i=1,\dots,6$ associated with the points $x^S_i=(\pm1,0,0),(0,\pm1,0),(0,0,\pm1)$ in such a way that $x\in\mathbb{S}^2$ belongs to $\omega^S_i$ if and only if $|x-x^S_i|=\min_{1\le j\le 6}\,|x-x^S_j|$. Next, we denote by $T_i$ the tangent planes to points $x^S_i$. Fixing an $i\in\{1,\dots,6\}$, the orthogonal projections of vectors $\{x^S_1,\dots,x^S_6\}$ onto the tangent plane $T_i$ delimit $4$ sectors on $T_i$. We subsequently halve each sector obtaining thus $8$ equi-angular sectors $\gamma^1_i,\dots,\gamma^8_i$ in $T_i$. Now we introduce the following partition of $M_1$ into $6\times 8$ segments (see Figure~\ref{fig:part_bund}): a point $(p,\xi)\in T\mathbb{S}^2$ belongs to $M^j_i$ if $p\in\omega^S_i$ and the orthogonal projection of $\xi$ onto the tangent plane $T_i$ belongs to the sector $\gamma^j_i$. It can be verified by symmetries of this partition that the normalized surface volume of each $M^j_i$ is equal to $1/48$. For $n=60000$ (i.e., at time $t^n=60$) we have for $i=1,\dots,6$, $j=1,\dots,8$
$\#\{l|U^n_l \in \omega^S_i\}\in (3380,3260) \approx N/6 = 3333$ and
$\#\{l|(U^n_l, {V^n_l})\in M^j_i\}\in (386,455) \approx N/6/8 = 417$,
see Figure~\ref{fig:dens_ut} left and  Figure~\ref{fig:dens_ut} right, respectively.
The numerical experiments indicate that the point-wise
probability measure for $(u,\dot{u})$ converges to the invariant measure $\overline{\nu}$.
The (rescaled) approximate $L_\infty$ error $\mathcal{E}_{\max}$ for $(u,\dot{u})$ has
similar evolution as  the approximate  $L_\infty$ error for $u$. Moreover, it seems that the convergence
of the error in time is exponential, see Figure~\ref{fig:linftybundle}.



\appendix

\section{Lie algebra}

Let $U$ be an open set on a $C^\infty$-manifold.

\begin{itemize}
\item The set $\mathscr L$ of all smooth tangent vector fields on $U$  is  a vector space with the Jacobi bracket.  Any  vector subspace of $\mathscr L$ closed under the Jacobi bracket is called \textit{ a Lie algebra}.
\item If $\mathcal X$ is a set of smooth tangent vector fields on $U$, then we denote by $\mathscr L(\mathcal X)$ the smallest Lie algebra containing $\mathcal X$.
\item If $\mathcal A\subseteq\mathscr L$ and $p\in U$, then we define $\mathcal A(p)=\{A_p:A\in\mathcal A\}$.
\end{itemize}

\begin{proposition} Define $L_0=\operatorname{span}\{\mathcal X\}$ and $L_n=\operatorname{span}\{L_{n-1}\cup\{[A,B]:A,B\in L_{n-1}\}\}$. Then $\bigcup L_n=\mathscr L(\mathcal X)$.
\end{proposition}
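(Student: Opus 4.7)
The plan is to prove the two inclusions $\bigcup_n L_n \subseteq \mathscr L(\mathcal X)$ and $\mathscr L(\mathcal X) \subseteq \bigcup_n L_n$ separately, both by directly unwinding the definitions. This is a standard construction-of-generated-object argument; I expect no serious obstacle, only bookkeeping with the nested sequence $L_0 \subseteq L_1 \subseteq \cdots$, which follows immediately from the definition of $L_n$ as the span of a set containing $L_{n-1}$.

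For the first inclusion, I would argue by induction on $n$ that $L_n \subseteq \mathscr L(\mathcal X)$. The base case $n=0$ holds because $\mathscr L(\mathcal X)$ is by definition a vector space and contains $\mathcal X$, hence contains $L_0=\operatorname{span}\{\mathcal X\}$. For the inductive step, assuming $L_{n-1}\subseteq\mathscr L(\mathcal X)$, the closure of $\mathscr L(\mathcal X)$ under the Jacobi bracket gives $\{[A,B]:A,B\in L_{n-1}\}\subseteq\mathscr L(\mathcal X)$, and then the vector-space property yields $L_n\subseteq\mathscr L(\mathcal X)$. Taking the union concludes this direction.

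For the reverse inclusion, I would verify that $\mathscr M:=\bigcup_n L_n$ is itself a Lie algebra containing $\mathcal X$; then the minimality of $\mathscr L(\mathcal X)$ forces $\mathscr L(\mathcal X)\subseteq\mathscr M$. That $\mathcal X\subseteq\mathscr M$ is clear since $\mathcal X\subseteq L_0$. To see that $\mathscr M$ is a vector subspace of $\mathscr L$, pick $A,B\in\mathscr M$, say $A\in L_m$ and $B\in L_k$; using the chain $L_0\subseteq L_1\subseteq\cdots$ we obtain $A,B\in L_{\max(m,k)}$, so any linear combination lies in $L_{\max(m,k)}\subseteq\mathscr M$. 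To see closure under the Jacobi bracket, the same reduction places $A,B$ in a common $L_N$, and then $[A,B]\in L_{N+1}\subseteq\mathscr M$ by the definition of $L_{N+1}$.

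The only point that warrants a one-line verification is the monotonicity $L_{n-1}\subseteq L_n$, which holds because $L_n$ is defined as the span of a set that already contains $L_{n-1}$; combining the two inclusions then yields $\bigcup_n L_n=\mathscr L(\mathcal X)$.
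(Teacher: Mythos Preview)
Your argument is correct and is the standard one. The paper in fact states this proposition without proof, treating it as an elementary fact about generated Lie algebras; your two-inclusion argument with the monotone chain $L_0\subseteq L_1\subseteq\cdots$ is exactly what one would supply to fill this gap.
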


\begin{proposition}\label{dimeq} Assume that $\mathcal X \subset \mathscr L$. Let $X_1,\dots,X_m,Y\in\mathscr L$ and let $f_i\in C^\infty(U)$. Then
$$
\mathscr L(X_1,\dots,X_m,Y)(p)=\mathscr L(X_1,\dots,X_m,Y+\sum_{j=1}^mf_jX_j)(p),\qquad p\in U.
$$
\end{proposition}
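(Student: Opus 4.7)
The plan is to prove both inclusions, and by symmetry only one direction needs actual work. Setting $\widetilde Y = Y + \sum_{j=1}^m f_j X_j$, we have $Y = \widetilde Y - \sum_{j=1}^m f_j X_j$, so the hypotheses are symmetric in the two generating sets. Hence it suffices to show $\mathscr L(X_1,\dots,X_m,\widetilde Y)(p)\subseteq\mathscr L(X_1,\dots,X_m,Y)(p)$ for every $p\in U$.

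The key structural observation is that the $C^\infty(U)$-module $\mathcal M$ generated by $\mathscr L(X_1,\dots,X_m,Y)$ inside the space of smooth vector fields is already closed under the Jacobi bracket. Indeed, for $A,B\in\mathscr L(X_1,\dots,X_m,Y)$ and $f,g\in C^\infty(U)$, the standard identity
\[
[fA,gB]=fg[A,B]+f\,A(g)\,B-g\,B(f)\,A
\]
shows $[fA,gB]\in\mathcal M$, since $[A,B],A,B\in\mathscr L(X_1,\dots,X_m,Y)$ and the scalar factors are smooth functions on $U$. Extending by bilinearity handles arbitrary $C^\infty(U)$-linear combinations.

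Once $\mathcal M$ is known to be a Lie subalgebra of $\mathscr L$, I will observe that it contains all of the generators $X_1,\dots,X_m,\widetilde Y$: each $X_i$ lies in $\mathscr L(X_1,\dots,X_m,Y)\subseteq\mathcal M$, and $\widetilde Y = Y + \sum f_j X_j$ is by construction an element of $\mathcal M$. By minimality of $\mathscr L(X_1,\dots,X_m,\widetilde Y)$ as a Lie algebra containing these vector fields, we conclude $\mathscr L(X_1,\dots,X_m,\widetilde Y)\subseteq\mathcal M$.

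The final step is pointwise evaluation. For any $V\in\mathcal M$, writing $V=\sum_k h_k A_k$ with $h_k\in C^\infty(U)$ and $A_k\in\mathscr L(X_1,\dots,X_m,Y)$, we have $V_p=\sum_k h_k(p)\,(A_k)_p\in\mathscr L(X_1,\dots,X_m,Y)(p)$, because $\mathscr L(X_1,\dots,X_m,Y)(p)$ is an $\mathbb R$-linear subspace of $T_pU$. This yields the desired inclusion at $p$, completing the proof after invoking symmetry. I do not anticipate a real obstacle here; the only thing to be careful about is not confusing $C^\infty(U)$-linear combinations (which need not belong to the Lie algebra itself) with $\mathbb R$-linear combinations — but this distinction collapses upon evaluation at a single point, which is exactly what the statement asks for.
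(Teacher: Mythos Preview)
Your proof is correct and follows essentially the same approach as the paper: the paper likewise passes to the $C^\infty(U)$-module $\mathscr C^i$ generated by $\mathscr L(\mathcal A^i)$, observes it is a Lie algebra containing the other generating set, and then evaluates pointwise to collapse the $C^\infty$-coefficients to scalars. You spell out the bracket identity $[fA,gB]=fg[A,B]+fA(g)B-gB(f)A$ and the pointwise step more explicitly than the paper does, but the architecture is identical.
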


\begin{proof} Let us write $\mathcal A^1=\{X_1,\dots,X_m,Y\}$, $\mathcal A^2=\{X_1,\dots,X_m,Y+\sum_{j=1}^mf_jX_j\}$,
$$
\mathscr C^i=\left\{\sum_{k=1}^Kh_kL_k:h_k\in C^\infty(U),L_k\in\mathscr L(\mathcal A^i), K\in\mathbb{N}\right\}.
$$
Apparently, $\mathscr C^i$ is a Lie algebra for $i\in\{1,2\}$, $\mathcal A^i\subseteq\mathscr C^j$ whenever $\{i,j\}=\{1,2\}$ hence $\mathscr L(\mathcal A^i)\subseteq\mathscr C^j$ whenever $\{i,j\}=\{1,2\}$. But then
$$
\mathscr L(\mathcal A^i)(p)\subseteq\mathscr C^j(p)\subseteq\mathscr L(\mathcal A^j)(p).
$$
\end{proof}

\end{document}